\documentclass[12pt,reqno,twoside]{amsart}%

\usepackage{amsmath,amsthm,amscd,amsthm,upref,indentfirst}
\usepackage{amsfonts,mathrsfs}
\usepackage{amssymb,amsbsy,bm}
\usepackage{graphicx}

\usepackage{amsaddr}

\usepackage{soul}

\setcounter{tocdepth}{3}

\usepackage[us,long,24hr]{datetime}

\usepackage{mathabx}

\usepackage[usenames,dvipsnames]{color}

\theoremstyle{plain}
\newtheorem{theorem}{Theorem}

\newtheorem{proposition}[theorem]{Proposition}

\theoremstyle{definition}
\newtheorem{definition}[theorem]{Definition}

\theoremstyle{remark}
\newtheorem{remark}[theorem]{Remark}

\newtheorem{example}[theorem]{Example}
\numberwithin{equation}{section}
\numberwithin{theorem}{section}

\normalfont\upshape
\usepackage{exscale}
\usepackage[scaled=0.86]{helvet}

\usepackage[scr=euler,scrscaled=1.1,bb=txof,bbscaled=1.1,cal=boondox,calscaled=1.1]{mathalfa}
\renewcommand{\mathbf}{\bm}
\renewcommand{\mathit}[1]{\mathscr #1}
\renewcommand{\mathfrak}[1]{{\textsc{\upshape #1}}}
\renewcommand{\mathtt}[1]{\scalebox{1.15}{\bf \texttt{\upshape#1}}}
\renewcommand{\emph}[1]{\mathtt{#1}}
\renewcommand{\mathrm}[1]{\scalebox{1.01}{\textbf{\upshape #1}}}
\usepackage{float}

\usepackage[justification=centering]{caption}

\tolerance=9000 \hbadness=9000
\setlength{\textwidth}{17.7cm}
\setlength{\textheight}{23cm}
\setlength{\oddsidemargin}{-0.2cm}
\setlength{\evensidemargin}{-0.2cm}
\setlength{\topmargin}{-0.7cm}

\numberwithin{equation}{section}
\numberwithin{theorem}{section}

\usepackage[normalem]{ulem}
\usepackage[square,comma]{natbib}

\usepackage{mparhack}

\usepackage{keyval}
\usepackage{totcount}
\newtotcounter{citnum} 
\def\oldbibitem{} \let\oldbibitem=\bibitem
\def\bibitem{\stepcounter{citnum}\oldbibitem}

\usepackage[verbose]{backref}
\backrefsetup{verbose=false}
\renewcommand*{\backref}[1]{}
\renewcommand*{\backrefalt}[4]{[{\tiny%
    \ifcase #1 \textsl{Not cited}%
          \or \textsl{Cited on page}~\textcolor{BrickRed}{#2}%
          \else \textsl{Cited on pages}~\textcolor{BrickRed}{#2}%
    \fi%
    }]}

\usepackage{epigraph}

\usepackage{fancyhdr}
\headsep = 25pt
\author{\small\scshape S\lowercase{teven} D\lowercase{uplij}}

\address{
Yantai Research Institute,
Harbin Engineering University,
265615 Yantai, China
\\and\\
Center for Information Technology,
University of M\"unster,
48149 M\"unster,
Germany
}
\email{\small \sf douplii@uni-muenster.de;
sduplij@gmail.com;
http://www.uni-muenster.de/IT.StepanDouplii}

\title{\large\bfseries\scshape
H\lowercase{igher power polyadic group rings}}

\date{\textit{of start} August 15, 2025. \textit{Date}:
\textit{of completion}
October 15, 2025.
\mbox{}\hskip 1.16em
\textit{Total}:
18 
references
}

\renewcommand{\refname}{\textsc{References}}

\let\origsection\section
\renewcommand{\section}[1]{\sectionmark{#1}\origsection{#1}}
\let\origsubsection\subsection
\renewcommand{\subsection}[1]{\subsectionmark{#1}\origsubsection{#1}}

\makeatletter
\renewenvironment{thebibliography}[1]{%
  \@xp\origsection\@xp*\@xp{\refname}%
  \normalfont\footnotesize\labelsep .9em\relax
  \renewcommand\theenumiv{\arabic{enumiv}}\let\p@enumiv\@empty
  \vspace*{-5pt}
  \list{\@biblabel{\theenumiv}}{\settowidth\labelwidth{\@biblabel{#1}}%
    \leftmargin\labelwidth \advance\leftmargin\labelsep
    \usecounter{enumiv}}%
  \sloppy \clubpenalty\@M \widowpenalty\clubpenalty
  \sfcode`\.=\@m
}{%
  \def\@noitemerr{\@latex@warning{Empty `thebibliography' environment}}%
  \endlist
}
\makeatother

\subjclass[2010]{11A07, 11A67, 16S34, 17A40, 17A42, 20N15, 20C05}
\keywords{augmentation map, group ring, arity, polyadic structure, ternary group, polyadic zero, querelement}

\begin{document}

\mbox{}
\vskip 1.8cm
\begin{abstract}

\noindent This paper introduces and systematically develops the theory of
polyadic group rings, a higher arity generalization of classical group rings
$\mathcal{R}[\mathsf{G}]$. We construct the fundamental operations of these
structures, defining the $\mathbf{m}_{r}$-ary addition and $\mathbf{n}_{r}%
$-ary multiplication for a polyadic group ring $\mathrm{R}^{[\mathbf{m}%
_{r},\mathbf{n}_{r}]}=\mathcal{R}^{[m_{r},n_{r}]}[\mathsf{G}^{[n_{g}]}]$ built
from a nonderived $(m_{r},n_{r})$-ring and a nonderived $n_{g}$-ary group. A central result is the
derivation of the \textquotedblleft quantization\textquotedblright\ conditions
that interrelate these arities, governed by the arity freedom principle, which
also extends to operations with higher polyadic powers. We establish key
algebraic properties, including conditions for total associativity and the
existence of a zero element and identity. The concepts of the polyadic
augmentation map and augmentation ideal are generalized, providing a bridge to
the classical theory. The framework is illustrated with explicit examples,
solidifying the theoretical constructions. This work establishes a new
foundation in ring theory with potential applications in cryptography and
coding theory, as evidenced by recent schemes utilizing polyadic structures.

\end{abstract}

\maketitle



\thispagestyle{empty}
\mbox{}
\vspace{-0.5cm}
\tableofcontents
\newpage

\pagestyle{fancy}

\addtolength{\footskip}{15pt}

\renewcommand{\sectionmark}[1]{%
\markboth{
{ \scshape #1}}{}}

\renewcommand{\subsectionmark}[1]{%
\markright{
\mbox{\;}\\[5pt]
\textmd{#1}}{}}

\fancyhead{}
\fancyhead[EL,OR]{\leftmark}
\fancyhead[ER,OL]{\rightmark}
\fancyfoot[C]{\scshape -- \textcolor{BrickRed}{\thepage} --}


\renewcommand\headrulewidth{0.5pt}
\fancypagestyle {plain1}{ %
\fancyhf{}
\renewcommand {\headrulewidth }{0pt}
\renewcommand {\footrulewidth }{0pt}
}

\fancypagestyle{plain}{ %
\fancyhf{}
\fancyhead[C]{\scshape S\lowercase{teven} D\lowercase{uplij} \hskip 0.7cm \MakeUppercase{Polyadic Hopf algebras and quantum groups}}
\fancyfoot[C]{\scshape - \thepage  -}
\renewcommand {\headrulewidth }{0pt}
\renewcommand {\footrulewidth }{0pt}
}

\fancypagestyle{fancyref}{ %
\fancyhf{} 
\fancyhead[C]{\scshape R\lowercase{eferences} }
\fancyfoot[C]{\scshape -- \textcolor{BrickRed}{\thepage} --}
\renewcommand {\headrulewidth }{0.5pt}
\renewcommand {\footrulewidth }{0pt}
}

\fancypagestyle{emptyf}{
\fancyhead{}
\fancyfoot[C]{\scshape -- \textcolor{BrickRed}{\thepage} --}
\renewcommand{\headrulewidth}{0pt}
}
\thispagestyle{emptyf}

\section{\textsc{Introduction}}

The theory of group rings, which constructs a ring $\mathcal{R}[\mathsf{G}]$
from a given ring $\mathcal{R}$ and a group $\mathsf{G}$, is a cornerstone of
modern algebra. Its applications permeate various fields, including
representation theory, homological algebra, and algebraic topology
\cite{bovdi,passman,sehgal}. The standard construction leverages the binary
operations of the constituent ring and group to define an associative algebra,
providing a rich framework for studying the interplay between ring-theoretic
and group-theoretic properties.

A significant and modern generalization of classical algebraic arises from
increasing the arity of their fundamental operations. This leads to the theory
of polyadic algebraic structures \cite{dor3,pos}, where operations map $n$
elements to a single one, with $n\geq3$. This framework reveals phenomena
absent in the binary case; for instance, polyadic groups ($n$-ary groups) can
exist without a unique identity element or inverses in the classical sense,
with their structure governed by the more general concept of a querelement
\cite{dor3}. Similarly, polyadic rings, defined by an $m$-ary addition and an
$n$-ary multiplication linked by generalized distributivity laws, exhibit a
more complex and nuanced structure \cite{lee/but}.

While the theories of binary group rings and polyadic structures are
individually well-established, their synthesis the theory of polyadic group
rings remains largely unexplored. Constructing such an object, denoted
$\mathcal{R}^{[m_{r},n_{r}]}[\mathsf{G}^{[n_{g}]}]$ from an $(m_{r},n_{r}%
)$-ring and an $n_{g}$-ary group, presents fundamental challenges. The arities
of the initial structures are not independent; they are constrained by the
requirement that the resulting object must itself be a ring-like structure
with well-defined $\mathbf{m}_{r}$-ary addition and $\mathbf{n}_{r}$-ary
multiplication. This interplay is governed by what has been termed the arity
freedom principle \cite{duplij2022}, leading to "quantization" conditions that
determine the admissible arities $\mathbf{m}_{r}$ and $\mathbf{n}_{r}$ of the
resulting polyadic group ring.

In this article, we introduce and develop the theory of polyadic group rings.
Our primary objective is to generalize the classical construction to the
higher arity setting, establishing its foundational properties. The main
contributions of this work are as follows:

$\bullet$ We provide a rigorous definition of a polyadic group ring, formally
constructing its $\mathbf{m}_{r}$-ary addition and $\mathbf{n}_{r}$-ary
multiplication operations, carefully accounting for the arities of the
underlying ring and group.

$\bullet$ We derive the precise "quantization" conditions that link the
arities $(\mathbf{m}_{r},\mathbf{n}_{r})$ of the group ring to the arities
$(m_{r},n_{r})$ of the initial ring and $n_{g}$ of the initial group,
including the novel case of operations with higher polyadic powers.

$\bullet$ We establish key properties of these structures, proving under which
conditions the polyadic group ring is totally associative and possesses
analogues of a zero element and, when applicable, an identity.

$\bullet$ We define and investigate the concepts of the polyadic augmentation
map and the polyadic augmentation ideal, generalizing central tools from the
classical theory.

$\bullet$ We illustrate the theory with concrete, non-trivial examples
involving nonderived polyadic rings and finite polyadic groups, explicitly
computing products and demonstrating the workings of the constructed operations.

This work not only broadens the landscape of ring theory by introducing a new
class of algebraic objects but also provides a framework for future
investigations into their representation theory, homology, and other
invariants. Furthermore, the complex, non-binary operations inherent to
polyadic group rings present a promising algebraic platform for applications
in coding theory \cite{berlekamp,ric/urb} and post-quantum cryptography
\cite{men/oor/van}. The intricate structure of these systems, particularly the
convoluted multiplication defined by higher-arity group laws, could underpin
the development of new families of non-linear codes and form the basis for
multivariate-based encryption schemes or key exchange protocols resistant to
quantum cryptanalysis. This opens a new chapter in the study of higher arity
algebraic structures and their potential for practical computation and security.

A compelling demonstration of this practical potential can be found in
\cite{dup/guo}. The authors construct a novel encryption and decryption
procedure that directly leverages polyadic algebraic structures alongside
signal processing methods \cite{oppenheim}, which represents a tangible and
promising application of polyadic theory to cryptography, moving beyond purely
theoretical constructs. Its emergence strongly validates the timeliness and
relevance of foundational research into polyadic group rings, suggesting that
the structures formalized in this work may serve as the bedrock for future
cryptographic innovations and other applied systems.

\section{\textsc{Preliminaries}}

Here we present the notation and the general properties of polyadic structures
(for more details and references, consult \cite{duplij2022}).

Let $S^{\times n}$ be $n$-fold Cartesian product of a non-empty set $S$.
Elements of the form $(x_{1},\ldots,x_{n})\in S^{\times n}$ are termed polyads
or $n$-tuples $(\mathfrak{x})$. An $n$-tuple consisting of identical elements
is denoted $(x^{n})$. A polyadic operation (or $n$-ary operation) is defined
as a mapping ${\mu}_{n}:S^{\times n}\rightarrow S$, denoted by ${\mu}%
_{n}[\mathfrak{x}]$. A polyadic structure $\langle S\mid{\mu}_{n_{i}}\rangle$
consists of a set $S$ that is closed under a family of polyadic operations
${\mu}_{n_{i}}$.

The fundamental one-operation polyadic structure is the $n$-ary magma
$\mathcal{M}=\langle S\mid{\mu}_{n}\rangle$. The imposition of additional
axioms results in various group-like structures. For instance, a polyadically
associative magma constitutes an $n$-ary semigroup $\mathcal{S}_{n}=\langle
S\mid{\mu}_{n}\mid assoc\rangle$. Polyadic associativity is defined through
the invariance relation ${\mu}_{n}[\mathfrak{x},{\mu}_{n}[\mathfrak{y}%
],\mathfrak{z}]=invariant$, wherein the embedded multiplication may appear in
any of the $n-1$ positions (resulting in $n-1$ relations), which enables the
omission of parentheses in compositions. The polyads $\mathfrak{x}%
,\mathfrak{y},\mathfrak{z}$ have appropriate lengths such that the total
number of elements is $2n-1$. This iterated product
\begin{equation}
{\mu}_{n}^{\circ\ell_{\mu}}[\mathfrak{x}]=\overset{\ell_{\mu}}{\overbrace
{{\mu}_{n}[{\mu}_{n}[\ldots{\mu}_{n}[\mathfrak{x}]]]}},\quad\mathfrak{x}\in
S^{\ell_{\mu}(n-1)+1}, \label{ml}%
\end{equation}
where $\ell_{\mu}$ denotes the number of multiplication operations. From
(\ref{ml}), a fundamental distinction between polyadic and conventional binary
($n=2$) structures arises: the length $w_{\mu}(n)$ of a word in a composition
of $n$-ary multiplications is not arbitrary but quantized, assuming only the
admissible values indicating that multiplication is possible only for%
\begin{equation}
L^{\text{admiss}}(n,\ell_{\mu})=\ell_{\mu}(n-1)+1, \label{wl}%
\end{equation}
elements. This viewpoint facilitates the classification of polyadic operations
into two categories: those iterated from binary or lower-arity operations and
those that are noniterated, or equivalently, derived and nonderived.
Obviously, the latter are of more interesting to investigate.

We now recall the definitions of key elements in polyadic structures. For an
element $x\in S$, its $\ell_{\mu}$-polyadic power (or higher polyadic power)
is defined by
\begin{equation}
x^{\langle\ell_{\mu}\rangle}={\mu}_{n}^{\circ\ell_{\mu}}[x^{\ell_{\mu}%
(n-1)+1}], \label{xl}%
\end{equation}
which, in the binary case $n=2$, yields $x^{\langle\ell_{\mu}\rangle}%
=x^{\ell_{\mu}+1}$, differing by unity from the conventional power.

A polyadic idempotent $x_{id}$ (if existent) satisfies
\begin{equation}
x_{id}^{\langle\ell_{\mu}\rangle}=x_{id},\ \ \ \ x_{id}\in S. \label{id}%
\end{equation}

A polyadic zero $z$ is uniquely defined by the $n-1$ conditions
\begin{equation}
{\mu}_{n}[z,\mathfrak{x}]=z,\ \ \ \ \mathfrak{x}\in S^{n-1}, \label{z}%
\end{equation}
with $z$ positioned in any of the $n$ argument slots. A polyadic nilpotent
element $x_{nil}$ is defined by
\begin{equation}
x_{nil}^{\langle\ell_{\mu}\rangle}=z,\quad x_{nil}\in S. \label{mx}%
\end{equation}

A neutral $(n-1)$-polyad $\mathfrak{e}$ satisfies
\begin{equation}
{\mu}_{n}[x,\mathfrak{e}]=x,\quad\mathfrak{e}\in S^{n-1},
\end{equation}
which is typically non-unique. If all components of the neutral polyad are
identical, $\mathfrak{e}=e^{n-1}$, then
\begin{equation}
{\mu}_{n}[x,e^{n-1}]=x, \label{e1}%
\end{equation}
and $e$ is termed an identity of $\langle S\mid{\mu}_{n}\rangle$; it may
appear in any of the $n$ positions within the operation. From (\ref{z}) with
$\mathfrak{x}=z^{n-1}$ and (\ref{e1}) with $x=e$, it follows that both the
polyadic zero $z$ and the identity $e$ are idempotents satisfying (\ref{id}).
Certain exotic polyadic structures may lack idempotents, a zero, or an
identity altogether, or may feature multiple identities \cite{duplij2022}.

In the polyadic case ($n\geq3$), the notion of invertibility is not linked to
the identity (\ref{e1}) but is determined by the querelement $\bar{x}=\bar
{x}(x)$, defined via the $n-1$ relations \cite{dor3}
\begin{equation}
{\mu}_{n}[\bar{x},x^{n-1}]=x,\quad x\in S, \label{mq}%
\end{equation}
which must hold for $\bar{x}$ in each of the $n$ possible positions. Such an
element $x$ is termed polyadically invertible. If every element of an $n$-ary
semigroup $\mathcal{S}_{n}$ is polyadically invertible, then $\mathcal{S}_{n}$
constitutes an $n$-ary (polyadic) group $\mathcal{G}_{n}=\langle S\mid{\mu
}_{n}\mid assoc\rangle$. Notably, the presence of an identity is not a
prerequisite for polyadic groups.

Structures endowed with two polyadic operations fall within the class of
ring-like polyadic structures \cite{lee/but}. A polyadic ring, or
$(m,n)$-ring, $\mathcal{R}_{m,n}=\langle S\mid{\nu}_{m},{\mu}_{n}\rangle$,
consists of a non-empty set $S$ equipped with an $m$-ary addition ${\nu}%
_{m}:S^{m}\rightarrow S$ and an $n$-ary multiplication ${\mu}_{n}%
:S^{n}\rightarrow S$, such that: $\langle S\mid{\nu}_{m}\mid assoc\mid
comm\rangle$ forms an $m$-ary commutative group and $\langle S\mid{\mu}%
_{n}\mid assoc\rangle$ forms an $n$-ary semigroup. The operations ${\nu}_{m}$
and ${\mu}_{n}$ are interconnected by the following $n$-ary distributivity
relations: \cite{duplij2022}%
\begin{align}
&  {\mu}_{n}[{\nu}_{m}[x_{1},\ldots,x_{m}],y_{2},y_{3},\ldots,y_{n}%
]\nonumber\\
&  ={\nu}_{m}[{\mu}_{n}[x_{1},y_{2},y_{3},\ldots,y_{n}],{\mu}_{n}[x_{2}%
,y_{2},y_{3},\ldots,y_{n}],\ldots,\mathbf{\mu}_{n}[x_{m},y_{2},y_{3}%
,\ldots,y_{n}]], \label{dis1}%
\end{align}%
\begin{align}
&  {\mu}_{n}[y_{1},{\nu}_{m}[x_{1},\ldots,x_{m}],y_{3},\ldots,y_{n}%
]\nonumber\\
&  ={\nu}_{m}[{\mu}_{n}[y_{1},x_{1},y_{3},\ldots,y_{n}],{\mu}_{n}[y_{1}%
,x_{2},y_{3},\ldots,y_{n}],\ldots,\mathbf{\mu}_{n}[y_{1},x_{m},y_{3}%
,\ldots,y_{n}]], \label{dis2}%
\end{align}%
\begin{align}
&  \vdots\nonumber\\
&  {\mu}_{n}[y_{1},y_{2},\ldots,y_{n-1},{\nu}_{m}[x_{1},\ldots,x_{m}%
]]\nonumber\\
&  ={\nu}_{m}[{\mu}_{n}[y_{1},y_{2},\ldots,y_{n-1},x_{1}],{\mu}_{n}%
[y_{1},y_{2},\ldots,y_{n-1},x_{2}],\ldots,{\mu}_{n}[y_{1},y_{2},\ldots
,y_{n-1},x_{m}]], \label{dis3}%
\end{align}
where $x_{i},y_{j}\in S$, $i=1,\ldots,m$, $j=1,\ldots,n$.

If not all distributivity relations (\ref{dis1})--(\ref{dis3}) or
associativity relations hold, the ring is designated as partial (in contrast
to total), giving rise to a multitude of possible polyadic ring variants.
Further details are in \cite{duplij2022}.

\section{\textsc{Binary group rings}}

Here recall in brief the main constructions of the binary group rings in the
standard approach \cite{bovdi,passman,sehgal,zal/mik,mil/seh} and then present
them in the \textquotedblleft polyadic\textquotedblright\ language, which will
make their generalization to the novel higher arity approach more clear and transparent.

The most natural way to construct from two given one-set algebraic structures
$\mathit{A}\left(  1\right)  $ and $\mathit{B}\left(  1\right)  $ a new
two-set algebraic structure $\mathit{C}\left(  2\right)  $ is considering
formal combinations of elements from $\mathit{B}\left(  1\right)  $ having
\textquotedblleft weights\textquotedblright\ from $\mathit{A}\left(  1\right)
$ being \textquotedblleft more linear\textquotedblright, which is usually
denoted $\mathit{A}\left(  1\right)  \mathit{B}\left(  1\right)  $. Without
considering a product in $\mathit{A}\left(  1\right)  \mathit{B}\left(
1\right)  $ the result is only a free module-like structure in which
$\mathit{A}\left(  1\right)  $ plays a role of \textquotedblleft
scalars\textquotedblright, while $\mathit{B}\left(  1\right)  $ being the
\textquotedblleft basis\textquotedblright, taking the staring analogy with a
vector space. Further various definitions of a multiplication in
$\mathit{A}\left(  1\right)  \mathit{B}\left(  1\right)  \mathit{\ }$lead to
different algebra-like structures with nontrivial properties, which are
usually denoted as $\mathit{C}\left(  2\right)  =\mathit{A}\left(  1\right)
\left[  \mathit{B}\left(  1\right)  \right]  $. An equivalent approach to the
latter is consideration of the set of mappings $\mathit{B}\left(  1\right)
\rightarrow\mathit{A}\left(  1\right)  $ formally multiplied by
\textquotedblleft scalars\textquotedblright\ from $\mathit{A}\left(  1\right)
$ with the pointwise addition and the product as a convolution, commonly
denoted as $\mathit{A}\left(  1\right)  ^{\mathit{B}\left(  1\right)  }$. We
will exploit the first definition for higher arity generalizations. Typically,
the role of \textquotedblleft scalars\textquotedblright\ from $\mathit{A}%
\left(  1\right)  $ is played by rings, fields, etc., and for the role of
\textquotedblleft vectors\textquotedblright\ from $\mathit{B}\left(  1\right)
$ one takes semigroups, monoids, groups, loops, and so on.

In the simplest case, $\mathit{A}\left(  1\right)  $ is an associative ring
$\mathcal{R}$ (having the underlying set $R$, with the possible zero $0_{R}$
and unit $1_{R}$), and $\mathit{B}\left(  1\right)  $ is a group $\mathsf{G}$
(having the underlying set $G$, with the identity $\mathsf{e}_{\mathsf{G}}$),
and $\mathcal{R}\mathsf{G}$ can be built as follows (in the standard notation
\cite{zal/mik,passman,mil/seh}).

\begin{definition}
A free $\mathcal{R}$-module $\mathcal{R}\mathsf{G}$ with the basis $\left\{
\mathsf{g}\mid\mathsf{g}\in\mathsf{G}\right\}  $ is the set of finite formal
sums%
\begin{equation}
\sum_{\mathsf{g}\in\mathsf{G}}r_{\mathsf{g}}\bullet\mathsf{g}%
,\ \ \ r_{\mathsf{g}}\in R,\ \ \mathsf{g}\in G, \label{rg}%
\end{equation}
which endowed with the left-\textquotedblleft componentwise\textquotedblright%
\ addition%
\begin{equation}
\left(  \sum_{\mathsf{g}\in\mathsf{G}}r_{\mathsf{g}}\bullet\mathsf{g}\right)
+\left(  \sum_{\mathsf{g}\in\mathsf{G}}r_{\mathsf{g}}^{\prime}\bullet
\mathsf{g}\right)  =\left(  \sum_{\mathsf{g}\in\mathsf{G}}\left(
r_{\mathsf{g}}+r_{\mathsf{g}}^{\prime}\right)  \bullet\mathsf{g}\right)
\label{rrg}%
\end{equation}
and left-\textquotedblleft scalar\textquotedblright\ multiplication%
\begin{equation}
\lambda\left(  \sum_{\mathsf{g}\in\mathsf{G}}r_{\mathsf{g}}\bullet
\mathsf{g}\right)  =\left(  \sum_{\mathsf{g}\in\mathsf{G}}\left(
\lambda\,r_{\mathsf{g}}\right)  \bullet\mathsf{g}\right)  ,\ \ \lambda\in R.
\label{lr}%
\end{equation}

\end{definition}

Obviously, $\mathsf{G}\subset\mathcal{R}\mathsf{G}$, because for every
$\mathsf{h}\in G$ one can choose $r_{\mathsf{h}}=1_{R}$ and $r_{\mathsf{g}%
}=0_{R}$, if $\mathsf{g}\neq\mathsf{h}$. Thus, each element of $\mathcal{R}%
\mathsf{G}\mathcal{\ }$can be treated as a finite sum of such $r_{\mathsf{g}%
}\bullet\mathsf{g}$ for which $r_{\mathsf{g}}\neq0$, and the subset of such
$\mathsf{g\in}G_{\text{\textsl{supp}}}\subset G$ is the support of $\sum
r_{\mathsf{g}}\bullet\mathsf{g}$, i.e. $G_{\text{\textsl{supp}}}%
=$\textsl{support}$\left(  \sum r_{\mathsf{g}}\bullet\mathsf{g}\right)  $. In
general, if both underlying sets $R$ and $G$ are finite with $\left\vert
R\right\vert =N_{R}$ and $\left\vert G\right\vert =N_{\mathsf{G}}$, the total
number of elements in $\left\vert \mathcal{R}\mathsf{G}\right\vert
=N_{R\mathsf{G}}$ is $N_{R\mathsf{G}}=\left(  N_{R}\right)  ^{N_{\mathsf{G}}}$.

The product of sums (\ref{rg}) cannot be defined in left-\textquotedblleft
componentwise\textquotedblright\ way as the addition (\ref{rrg}), because it
ignores the group structure at all. Instead, consider the
both-\textquotedblleft componentwise\textquotedblright\ product of two terms
of different sums (\ref{rg}) and define their product by the natural way
$\left(  r_{\mathsf{g}}\bullet\mathsf{g}\right)  \left(  r_{\mathsf{h}%
}^{\prime}\bullet\mathsf{h}\right)  =\left(  r_{\mathsf{g}}r_{\mathsf{h}%
}^{\prime}\right)  \bullet\left(  \mathsf{gh}\right)  $. Then the
multiplication of elements from $\mathcal{R}\mathsf{G}$ can be presented as
follows (after reordering the terms)%
\begin{equation}
\left(  \sum_{\mathsf{g}\in G}r_{\mathsf{g}}\bullet\mathsf{g}\right)  \left(
\sum_{\mathsf{h}\in G}r_{\mathsf{h}}^{\prime}\bullet\mathsf{h}\right)
=\left(  \sum_{\mathsf{g}\in G}\sum_{\mathsf{h}\in G}\left(  r_{\mathsf{g}%
}r_{\mathsf{h}}^{\prime}\right)  \bullet\left(  \mathsf{gh}\right)  \right)
=\left(  \sum_{\mathsf{u}\in G}r_{\mathsf{u}}^{\prime\prime}\bullet
\mathsf{u}\right)  , \label{rgh}%
\end{equation}
where%
\begin{equation}
r_{\mathsf{u}}^{\prime\prime}=\sum_{\mathsf{g}\in G}\sum_{\mathsf{h}\in
G}\left(  r_{\mathsf{g}}r_{\mathsf{h}}^{\prime}\right)  |_{\mathsf{gh}%
=\mathsf{u}},\ \ \ \mathsf{g},\mathsf{h},\mathsf{u}\in G, \label{ru}%
\end{equation}
or in terms of one sum%
\begin{equation}
r_{\mathsf{u}}^{\prime\prime}=\sum_{\mathsf{g}\in G}\left(  r_{\mathsf{g}%
}r_{\mathsf{g}^{-1}\mathsf{u}}^{\prime}\right)  . \label{gu}%
\end{equation}

The product (\ref{rgh}) is associative and satisfies distributivity with
respect to the addition (\ref{rrg}).

\begin{definition}
The free module $\mathcal{R}\mathsf{G}$ (\ref{rg})--(\ref{lr}) endowed with
the product (\ref{rgh})--(\ref{ru}) becomes a ring which is called a group
ring denoted by $\mathrm{R}=\mathcal{R}\left[  \mathsf{G}\right]  $.
\end{definition}

If the initial ring is a field $\mathcal{R}=\mathcal{K}$, then $\mathcal{K}%
\left[  \mathsf{G}\right]  $ is called a group algebra over $\mathcal{K}$,
being a vector space over $\mathcal{K}$ with the dimension $\left\vert
G\right\vert $, if the group $\mathsf{G}$ is finite. Note that the group
algebra concept is the starting point for representation theory (see, e.g.
\cite{cur/rei,kirillov,erd/hol}).

Numerous properties of group rings were considered in
\cite{bovdi,passman,sehgal,zal/mik,mil/seh}, and refs therein.

Let us rewrite the main definitions of the binary group rings (\ref{rg}%
--(\ref{ru}) in the more detail \textquotedblleft polyadic\textquotedblright%
\ functional notation by writing the operations manifestly.

The initial (binary) group is the algebraic structure having one-set $\left\{
\mathsf{g}\right\}  =G$ and one main associative binary operation
$\mu_{\mathsf{G}}=\mu_{\mathsf{G}}^{\left[  \mathsf{2}\right]  }:G\times
G\rightarrow G$, as $\mathsf{G}=\mathsf{G}^{\left[  2\right]  }=\left\langle
G\mid\mu_{\mathsf{G}}^{\left[  \mathsf{2}\right]  },\left(  \ \right)
^{-1}\right\rangle $ together with the identity $\mathsf{e}_{\mathsf{G}}$
satisfying $\mu_{\mathsf{G}}\left[  \mathsf{e}_{\mathsf{G}},\mathsf{g}\right]
=\mu_{\mathsf{G}}\left[  \mathsf{g},\mathsf{e}_{\mathsf{G}}\right]
=\mathsf{g}$ and the inverse $\left(  \ \right)  ^{-1}$ such that
$\mu_{\mathsf{G}}\left[  \mathsf{g},\mathsf{g}^{-1}\right]  =\mu_{\mathsf{G}%
}\left[  \mathsf{g}^{-1},\mathsf{g}\right]  =\mathsf{e}_{\mathsf{G}}$ for all
$\mathsf{g}\in G$. The initial ring $\mathcal{R}$ is the one-set $\left\{
r\right\}  =R$ algebraic structure $\mathcal{R}=\mathcal{R}^{\left[
2,2\right]  }=\left\langle R\mid\nu_{R}^{\left[  2\right]  },\mu_{R}^{\left[
2\right]  }\right\rangle $ endowed by two binary operations: addition $\nu
_{R}=\nu_{R}^{\left[  2\right]  }:R\times R\rightarrow R$ and multiplication
$\mu_{R}=\mu_{R}^{\left[  2\right]  }:R\times R\rightarrow R$ which satisfy
distributivity, such that $\left\langle R\mid\nu_{R}^{\left[  2\right]
}\right\rangle $ is the additive semigroup, and $\left\langle R\mid\mu
_{R}^{\left[  2\right]  }\right\rangle $ is the multiplicative group.

Note that by (\ref{lr}) a new operation (scalar multiplication) in
$\mathcal{R}\mathsf{G}$ is quietly defined, which is possible, because any
ring is a module over itself. Therefore, at first glance, the resulting group
ring $\mathcal{R}\left[  \mathsf{G}\right]  $ is a 2-set and 4-operation
algebraic structure, but we will see that it is more complicated.

Using this notation we present the definition (\ref{rg})--(\ref{ru}) of the
group ring $\mathrm{R}=\mathcal{R}\left[  \mathsf{G}\right]  $ in the
\textquotedblleft polyadic\textquotedblright\ functional form. Instead of the
abstract sum $\sum$ in (\ref{rg}) we use the concrete summation
$\mathbf{\Sigma}$ by indices manifestly (see, e.g. \cite{zal/mik}). In this
way, an $\alpha$th element $\mathbf{r}\left(  \alpha\right)  $ of the group
ring $\mathrm{R}=\mathcal{R}\left[  \mathsf{G}\right]  $ (with the underlying
set $\mathfrak{R}$) can be written as the formal sum%
\begin{equation}
\mathrm{r}\left(  \alpha\right)  =\mathrm{r}\left(  \overrightarrow
{r}_{\overrightarrow{\mathsf{g}}}\left(  \alpha\right)  ,\overrightarrow
{\mathsf{g}}\right)  =\underset{i}{\mathbf{\Sigma}}\ r_{\mathsf{g}_{i}}\left(
\alpha\right)  \bullet\mathsf{g}_{i},\ \ \ \ r_{\mathsf{g}_{i}}\left(
\alpha\right)  \in R,\ \ \mathsf{g}_{i}\in G,\ \ \mathrm{r}\left(
\alpha\right)  \in\mathfrak{R}, \label{ra}%
\end{equation}
where $\overrightarrow{r}_{\overrightarrow{\mathsf{g}}}\left(  \alpha\right)
=\left(  r_{\mathsf{g}_{1}}\left(  \alpha\right)  ,r_{\mathsf{g}_{2}}\left(
\alpha\right)  ,\ldots\right)  $, $\overrightarrow{\mathsf{g}}=\left(
\mathsf{g}_{1},\mathsf{g}_{2},\ldots\right)  $ are the \textquotedblleft
vectors\textquotedblright\ of ring elements and group elements, respectively.
In case the group ring $\mathcal{R}\left[  \mathsf{G}\right]  $ is finite,
$\alpha=1,2\ldots\left\vert \mathfrak{R}\right\vert $.

Denote the binary addition in $\mathrm{R}=\mathcal{R}\left[  \mathsf{G}%
\right]  $ (\ref{rrg}) as $\mathbf{\nu}_{\mathfrak{R}}^{\left[  \mathbf{2}%
\right]  }:\mathfrak{R}\times\mathfrak{R}\rightarrow\mathfrak{R}$, then%
\begin{equation}
\mathbf{\nu}_{\mathfrak{R}}^{\left[  \mathbf{2}\right]  }\left[
\mathrm{r}\left(  \overrightarrow{r}_{\overrightarrow{\mathsf{g}}}\left(
\alpha_{1}\right)  ,\overrightarrow{\mathsf{g}}\right)  ,\mathrm{r}\left(
\overrightarrow{r}_{\mathsf{\vec{g}}}\left(  \alpha_{2}\right)
,\overrightarrow{\mathsf{g}}\right)  \right]  =\underset{i}{\mathbf{\Sigma}%
}\ \nu_{R}^{\left[  2\right]  }\left[  r_{\mathsf{g}_{i}}\left(  \alpha
_{1}\right)  ,r_{\mathsf{g}_{i}}\left(  \alpha_{2}\right)  \right]
\bullet\mathsf{g}_{i},\ \ \ \ r_{\mathsf{g}_{i}}\left(  \alpha_{1,2}\right)
\in R,\ \ \mathsf{g}_{i}\in G. \label{n2}%
\end{equation}

The binary multiplication in $\mathrm{R}=\mathcal{R}\left[  \mathsf{G}\right]
$ is denoted by $\mathbf{\mu}_{\mathfrak{R}}^{\left[  \mathbf{2}\right]
}:\mathfrak{R}\times\mathfrak{R}\rightarrow\mathfrak{R}$, using (\ref{rgh}) we
define the convolution-like operation%
\begin{align}
\mathbf{\mu}_{\mathfrak{R}}^{\left[  \mathbf{2}\right]  }\left[
\mathrm{r}\left(  \overrightarrow{r}_{\overrightarrow{\mathsf{g}}}\left(
\alpha_{1}\right)  ,\overrightarrow{\mathsf{g}}\right)  ,\mathrm{r}\left(
\overrightarrow{r}_{\mathsf{\vec{g}}}\left(  \alpha_{2}\right)
,\overrightarrow{\mathsf{g}}\right)  \right]   &  =\underset{i}{\mathbf{\Sigma
}}\underset{j}{\mathbf{\Sigma}}\ \mu_{R}^{\left[  2\right]  }\left[
r_{\mathsf{g}_{i}}\left(  \alpha_{1}\right)  ,r_{\mathsf{g}_{j}}\left(
\alpha_{2}\right)  \right]  \bullet\mu_{\mathsf{G}}^{\left[  \mathsf{2}%
\right]  }\left[  \mathsf{g}_{i},\mathsf{g}_{j}\right]  ,\label{m2}\\
r_{\mathsf{g}_{i,j}}\left(  \alpha_{1,2}\right)   &  \in R,\ \ \mathsf{g}%
_{i}\in G.\nonumber
\end{align}
Denote%
\begin{equation}
\mathsf{g}_{k}=\mu_{\mathsf{G}}^{\left[  \mathsf{2}\right]  }\left[
\mathsf{g}_{i},\mathsf{g}_{j}\right]  ,\ \ \ \ \ k=k\left(  i,j\right)
,\ \ \ \ \mathsf{g}_{i,j,k}\in G, \label{g}%
\end{equation}
then (\ref{rgh}) \textquotedblleft in components\textquotedblright\ takes the
form%
\begin{align}
\mathbf{\mu}_{\mathfrak{R}}^{\left[  \mathbf{2}\right]  }\left[
\mathrm{r}\left(  \overrightarrow{r}_{\overrightarrow{\mathsf{g}}}\left(
\alpha_{1}\right)  ,\overrightarrow{\mathsf{g}}\right)  ,\mathrm{r}\left(
\overrightarrow{r}_{\overrightarrow{\mathsf{g}}}\left(  \alpha_{2}\right)
,\overrightarrow{\mathsf{g}}\right)  \right]   &  =\underset{i}{\mathbf{\Sigma
}}\underset{j}{\mathbf{\Sigma}}\ \mu_{R}^{\left[  2\right]  }\left[
r_{\mathsf{g}_{i}}\left(  \alpha_{1}\right)  ,r_{\mathsf{g}_{j}}\left(
\alpha_{2}\right)  \right]  \bullet\mathsf{g}_{k\left(  i.j\right)  },\\
r_{\mathsf{g}_{i,j,k\left(  i,j\right)  }}\left(  \alpha_{1,2}\right)   &  \in
R,\ \ \mathsf{g}_{i}\in G.\nonumber
\end{align}

We can resolve (\ref{g}) with respect to $\mathsf{g}_{j}$, reorder indices and
present (\ref{m2}) as follows%
\begin{equation}
\mathbf{\mu}_{\mathfrak{R}}^{\left[  \mathbf{2}\right]  }\left[
\mathrm{r}\left(  \overrightarrow{r}_{\overrightarrow{\mathsf{g}}}\left(
\alpha_{1}\right)  ,\overrightarrow{\mathsf{g}}\right)  ,\mathrm{r}\left(
\overrightarrow{r}_{\overrightarrow{\mathsf{g}}}\left(  \alpha_{2}\right)
,\overrightarrow{\mathsf{g}}\right)  \right]  =\underset{k}{\mathbf{\Sigma}%
}\ r_{\mathsf{g}_{k}}^{\prime}\left(  \alpha_{1},\alpha_{2}\right)
\bullet\mathsf{g}_{k},
\end{equation}
where%
\begin{equation}
r_{\mathsf{g}_{k}}^{\prime}\left(  \alpha_{1},\alpha_{2}\right)  =\underset
{i}{\mathbf{\Sigma}}\ \mu_{R}^{\left[  2\right]  }\left[  r_{\mathsf{g}_{i}%
}\left(  \alpha_{1}\right)  ,r_{\mathsf{g}_{i}^{-1}\mathsf{g}_{k}}\left(
\alpha_{2}\right)  \right]  ,
\end{equation}
which is the component/function version of (\ref{gu}).

The set of sums (\ref{ra}) $\left\{  \mathrm{r}\right\}  =\mathfrak{R}$
together with associative addition (\ref{n2}) and multiplication (\ref{m2})
being distributive is the standard ring $\left\langle \mathfrak{R}%
\mid\mathbf{v}_{\mathfrak{R}}^{\left[  \mathbf{2}\right]  },\mathbf{\mu
}_{\mathfrak{R}}^{\left[  \mathbf{2}\right]  }\right\rangle $. However, the
group ring $\mathcal{R}\left[  \mathsf{G}\right]  $ is more than a ring,
because by definition it has the additional module-like operation, the
\textquotedblleft scalar\textquotedblright\ multiplication (\ref{lr}). Since
the \textquotedblleft scalars\textquotedblright\ are from the initial ring
$\mathcal{R}$, it is an internal ring product, because every ring is a module
by itself. But for the group ring $\mathcal{R}\left[  \mathsf{G}\right]  $
(\ref{lr}) is the unnoticed new operation (1-place action or $\mathcal{R}%
$-module) $\mathbf{\rho}_{\mathfrak{R}}^{\left[  1\right]  }:R\times
\mathfrak{R}\rightarrow\mathfrak{R}$, which is in the \textquotedblleft
polyadic\textquotedblright\ notation becomes (for $k$-place actions, see
\cite{duplij2022})%
\begin{equation}
\mathbf{\rho}_{\mathfrak{R}}^{\left[  \mathbf{1}\right]  }\left(  \lambda
\mid\mathrm{r}\left(  \overrightarrow{r}_{\overrightarrow{\mathsf{g}}}\left(
\alpha\right)  ,\overrightarrow{\mathsf{g}}\right)  \right)  =\underset
{i}{\mathbf{\Sigma}}\ \mu_{R}^{\left[  2\right]  }\left[  \lambda
,r_{\mathsf{g}_{i}}\left(  \alpha\right)  \right]  \bullet\mathsf{g}%
_{i},\ \ \ \ \lambda,r_{\mathsf{g}_{i}}\left(  \alpha\right)  \in
R,\ \ \mathsf{g}_{i}\in G. \label{r1}%
\end{equation}

In this way, we could think that the formal \textquotedblleft
polyadic\textquotedblright\ definition of the group ring, as 2 set and 3
operation algebraic structure $\mathrm{R}=\left\langle \mathfrak{R}%
,R,G\mid\mathbf{\nu}_{\mathfrak{R}}^{\left[  \mathbf{2}\right]  },\mathbf{\mu
}_{\mathfrak{R}}^{\left[  \mathbf{2}\right]  },\mathbf{\rho}_{\mathfrak{R}%
}^{\left[  \mathbf{1}\right]  }\right\rangle $, $\lambda\in R$. Nevertheless,
after adding the operations (addition $\nu_{R}^{\left[  2\right]  }$ and
multiplication $\mu_{R}^{\left[  2\right]  }$) in $\mathcal{R}$, we have (by
classification of \cite{duplij2022})

\begin{definition}
The binary group ring $\mathcal{R}\left[  \mathsf{G}\right]  $ is the 3 set
and 6 operation algebra-like structure%
\begin{equation}
\mathrm{R}=\left\langle \mathfrak{R},R,G\mid\mathbf{\nu}_{\mathfrak{R}%
}^{\left[  \mathbf{2}\right]  },\mathbf{\mu}_{\mathfrak{R}}^{\left[
\mathbf{2}\right]  },\mathbf{\rho}_{\mathfrak{R}}^{\left[  \mathbf{1}\right]
}\mid\nu_{R}^{\left[  2\right]  },\mu_{R}^{\left[  2\right]  }\mid
\mu_{\mathsf{G}}^{\left[  \mathsf{2}\right]  }\right\rangle , \label{rrrg}%
\end{equation}
where the first 3 operations are defined in (\ref{n2}), (\ref{m2}) (\ref{r1}), respectively.
\end{definition}

\section{\textsc{Polyadic group rings}}

Now we generalize the group ring concept to higher arity case and introduce a
novel algebraic structure, the polyadic group ring. By doing so, we take
advantage of the \textquotedblleft arity freedom principle\textquotedblright%
\ \cite{duplij2022}: in any algebraic structure, initial arities of all its
operations can be taken arbitrary, then the structural constraints appear from
the general dependences leading to \textquotedblleft quantization
rules\textquotedblright\ which forbidden certain combination of arities. In
this way, polyadic structures can have exotic properties, for instance $n$-ary
groups without identity or with many identities, polyadic fields without zero
or/and without unit, and so on \cite{duplij2022}, which leads to revision even
standard theorems and statements.

In the abstract setting from beginning of the previous section, the initial
polyadic algebraic structures now carry arbitrary arities $\mathit{A}^{\left[
arity_{\mathit{A}}\right]  }\left(  1\right)  $ and $\mathit{B}^{\left[
arity_{\mathit{B}}\right]  }\left(  1\right)  $. Then the resulting polyadic
algebra-like structure $\mathit{C}^{\left[  arity_{\mathit{C}}\right]
}(2)=\mathit{A}^{\left[  arity_{\mathit{A}}\right]  }\left(  1\right)  \left[
\mathit{B}^{\left[  arity_{\mathit{B}}\right]  }\left(  1\right)  \right]  $
will possess the specific arity $arity_{\mathit{C}}$ which is determined by
the operations framework.

So instead of the binary ring $\mathcal{R}$ as $\mathit{A}\left(  1\right)  $
and group $\mathsf{G}$ as $\mathit{B}\left(  1\right)  $, we consider $\left(
n_{r},m_{r}\right)  $-ring $\mathcal{R}^{\left[  m_{r},n_{r}\right]  }$ as
$\mathit{A}^{\left[  arity_{\mathit{A}}\right]  }\left(  1\right)  $ and
$n_{g}$-ary group $\mathsf{G}^{\left[  n_{g}\right]  }$ as $\mathit{B}%
^{\left[  arity_{\mathit{B}}\right]  }\left(  1\right)  $, where%
\begin{equation}
\mathcal{R}^{\left[  m_{r},n_{r}\right]  }=\left\langle R\mid\nu_{R}^{\left[
m_{r}\right]  },\mu_{R}^{\left[  n_{r}\right]  }\right\rangle , \label{mn}%
\end{equation}
with totally associative $m_{r}$-ary addition $\nu_{R}^{\left[  m_{r}\right]
}:R^{\times m_{r}}\rightarrow R$ and $n_{r}$-ary multiplication $\mu
_{R}^{\left[  n_{r}\right]  }:R^{\times n_{r}}\rightarrow R$, which satisfy
polyadic distributivity.

If polyadic zero $z_{R}$ and polyadic identity (or unity) $e_{R}$ in
$\mathcal{R}^{\left[  m_{r},n_{r}\right]  }$ exist, they satisfy additive
neutrality and multiplicative absorption, and multiplicative neutrality,
respectively%
\begin{align}
\nu_{R}^{\left[  m_{r}\right]  }\left[  r,\overset{m_{r}-1}{\overbrace
{z_{R},\ldots,z_{R}}}\right]   &  =r,\label{zr}\\
\mu_{R}^{\left[  m_{r}\right]  }\left[  r_{1},r_{2},\ldots,r_{m_{r}-1}%
,z_{R}\right]   &  =z_{R}\label{zrm}\\
\mu_{R}^{\left[  n_{r}\right]  }\left[  r,\overset{n_{r}-1}{\overbrace
{e_{R},\ldots,e_{R}}}\right]   &  =r,\ \ \ \ r,r_{i},z_{R},e_{R}\in
R,\label{er}%
\end{align}
where $z_{R}$ and $e_{R}$ can be on any places. The polyadic zero $z_{R}$ in
$\mathcal{R}^{\left[  m_{r},n_{r}\right]  }$ is not necessary for
$\left\langle R\mid\nu_{R}^{\left[  m_{r}\right]  }\right\rangle $ to be a
$m_{r}$-ary additive group for $m_{r}\geq3$ (which is impossible for binary
groups), but the additive querelement is important \cite{dup2017a}.
Nevertheless, one can adjoin the extraneous polyadic zero $\dot{z}_{R}\notin
R$ externally by extending the underlying set $R$ of the initial ring
$\mathcal{R}^{\left[  m_{r},n_{r}\right]  }$ as follows%
\begin{equation}
\dot{R}=R\cup\left\{  \dot{z}_{R}\right\}  ,\label{r0}%
\end{equation}
where $\dot{z}_{R}$ satisfies the needed standard relations (\ref{zr}) and
(\ref{zrm}).

The neutral element $e_{R}$ being a polyadic identity, has nothing with the
multiplicative invertibility in the ring $\mathcal{R}^{\left[  m_{r}%
,n_{r}\right]  }$. Nevertheless, some elements of $\mathcal{R}^{\left[
m_{r},n_{r}\right]  }$ can be invertible, which means that for them there
exists a polyadic analog of multiplicative inverse, the querelement $\bar{r}$
defined by \cite{dor3}%
\begin{equation}
\mu_{R}^{\left[  n_{r}\right]  }\left[  \bar{r},\overset{n_{r}-1}%
{\overbrace{r,\ldots,r}}\right]  =r,\ \ \ \ \bar{r},r\in R, \label{qm}%
\end{equation}
where $\bar{r}$ can be on any place and $n_{r}\geq3$. By analogy with the
binary ring, a polyadic ring without unity can be called a polyadic rng, a
non-unital polyadic ring or pseudo-ring. The simplest $\left(  n_{r}%
,m_{r}\right)  $-rng example is $2\mathbb{Z}$.

We denote in $\mathcal{R}^{\left[  m_{r},n_{r}\right]  }$ the subset of
multiplicatively invertible elements (sometimes called units) by $\emph{U}%
_{R}\subset R$ which, in the binary case, is called a unit group
$\mathit{U}^{\left[  \mathfrak{n}_{u}\right]  }\left(  \mathcal{R}^{\left[
m_{r},n_{r}\right]  }\right)  $. In the polyadic case the set $\mathit{U}%
^{\left[  \mathfrak{n}_{u}\right]  }$ should be the $\mathfrak{n}_{u}$-ary
group with $\mathfrak{n}_{u}=n_{r}$.

The $n_{g}$-ary group is%

\begin{equation}
\mathsf{G}^{\left[  \mathsf{n}_{g}\right]  }=\left\langle G\mid\mu
_{\mathsf{G}}^{\left[  \mathsf{n}_{g}\right]  },\overline{\left(  \ \right)
}\right\rangle , \label{gn}%
\end{equation}
with $\mathsf{n}_{g}$-ary multiplication $\mathsf{n}_{g}:G^{\times
\mathsf{n}_{g}}\rightarrow G$, and each element $\mathsf{g}$ has the analog of
inverse, its querelement $\mathsf{\bar{g}}$ obeying%
\begin{equation}
\mu_{\mathsf{G}}\left[  \mathsf{\bar{g}},\overset{\mathsf{n}_{g}-1}%
{\overbrace{\mathsf{g},\ldots,\mathsf{g}}}\right]  =\mathsf{g}%
,\ \ \ \ \mathsf{\bar{g}},\mathsf{g}\in G, \label{gg}%
\end{equation}
where $\mathsf{\bar{g}}$ can be on any place.

If in $\mathsf{G}^{\left[  \mathsf{n}_{g}\right]  }$ the polyadic identity
$\mathsf{e}_{\mathsf{G}}$ exists (which is not necessary for $\mathsf{n}%
_{g}\geq3$), it satisfies multiplicative neutrality
\begin{equation}
\mu_{\mathsf{G}}\left[  \mathsf{g},\overset{\mathsf{n}_{g}-1}{\overbrace
{\mathsf{e}_{\mathsf{G}},\ldots,\mathsf{e}_{\mathsf{G}}}}\right]
=\mathsf{g},\ \ \ \ \mathsf{e}_{\mathsf{G}},\mathsf{g}\in G,\label{eg}%
\end{equation}
where $\mathsf{g}$ can be on any place. For more details and definitions, see
\cite{duplij2022}.

Let us construct from $\left(  m_{r},n_{r}\right)  $-ring $\mathcal{R}%
^{\left[  m_{r},n_{r}\right]  }$ (\ref{mn}) and $n_{g}$-ary group
$\mathsf{G}^{\left[  n_{g}\right]  }$ (\ref{gn}) the polyadic group ring
$\mathrm{R}^{\left[  \mathbf{m}_{r},\mathbf{n}_{r}\right]  }$ with the same
underlying set of the formal sums (\ref{ra}) $\mathfrak{R}=\left\{
\mathrm{r}\right\}  $%
\begin{equation}
\mathrm{R}^{\left[  \mathbf{m}_{r},\mathbf{n}_{r}\right]  }=\mathcal{R}%
^{\left[  n_{r},m_{r}\right]  }\left[  \mathsf{G}^{\left[  \mathsf{n}%
_{g}\right]  }\right]  , \label{rm}%
\end{equation}
but now obeying $\mathbf{m}_{r}$-ary addition $\mathbf{\nu}^{\left[
\mathbf{m}_{r}\right]  }:\mathfrak{R}^{\times\mathbf{m}_{r}}\rightarrow
\mathfrak{R}$ and $\mathbf{n}_{r}$-ary multiplication $\mathbf{\mu}^{\left[
\mathbf{n}_{r}\right]  }:\mathfrak{R}^{\times\mathbf{n}_{r}}\rightarrow
\mathfrak{R}$. Using the \textquotedblleft arity freedom
principle\textquotedblright\ \cite{duplij2022} and analogy with the binary
case (\ref{rrrg}) we have

\begin{definition}
The polyadic group ring is the 3 set and 6 operation polyadic algebra-like
structure%
\begin{equation}
\mathrm{R}^{\left[  \mathbf{m}_{r},\mathbf{n}_{r}\right]  }=\left\langle
\mathfrak{R},R,G\mid\mathbf{\nu}_{\mathfrak{R}}^{\left[  \mathbf{m}%
_{r}\right]  },\mathbf{\mu}_{\mathfrak{R}}^{\left[  \mathbf{n}_{r}\right]
},\mathbf{\rho}_{\mathfrak{R}}^{\left[  \mathbf{k}_{\rho}\right]  }\mid\nu
_{R}^{\left[  m_{r}\right]  },\mu_{R}^{\left[  n_{r}\right]  }\mid
\mu_{\mathsf{G}}^{\left[  \mathsf{n}_{g}\right]  }\right\rangle . \label{rmn}%
\end{equation}

\end{definition}

Now we generalize the binary operations $\mathbf{\nu}_{\mathfrak{R}}^{\left[
\mathbf{2}\right]  }$ (\ref{n2}), $\mathbf{\mu}_{\mathfrak{R}}^{\left[
\mathbf{2}\right]  }$ (\ref{m2}) and $\mathbf{\rho}_{\mathfrak{R}}^{\left[
\mathbf{1}\right]  }$ (\ref{r1}) to higher arity setting, implying that the
arities of initial $\left(  m_{r},n_{r}\right)  $-ring $\mathcal{R}^{\left[
m_{r},n_{r}\right]  }$ (\ref{mn}) and $n_{g}$-ary group $\mathsf{G}^{\left[
n_{g}\right]  }$ (\ref{gn}) are given.

\begin{definition}
The $\mathbf{m}_{r}$-ary addition $\mathbf{\nu}^{\left[  \mathbf{m}%
_{r}\right]  }$ can be defined by analogy with (\ref{n2})
left-\textquotedblleft componentwise\textquotedblright\ by%
\begin{align}
\mathbf{\nu}_{\mathfrak{R}}^{\left[  \mathbf{m}_{r}\right]  }\left[
\mathrm{r}\left(  \overrightarrow{r}_{\overrightarrow{\mathsf{g}}}\left(
\alpha_{1}\right)  ,\overrightarrow{\mathsf{g}}\right)  ,\ldots,\mathrm{r}%
\left(  \overrightarrow{r}_{\mathsf{\vec{g}}}\left(  \alpha_{\mathbf{m}_{r}%
}\right)  ,\overrightarrow{\mathsf{g}}\right)  \right]   &  =\underset
{i}{\mathbf{\Sigma}}\ \nu_{R}^{\left[  m_{r}\right]  }\left[  r_{\mathsf{g}%
_{i}}\left(  \alpha_{1}\right)  ,\ldots,r_{\mathsf{g}_{i}}\left(
\alpha_{m_{r}}\right)  \right]  \bullet\mathsf{g}_{i},\label{mm}\\
r_{\mathsf{g}_{i}}\left(  \alpha_{1,\ldots,\mathbf{m}_{r}}\right)   &  \in
R,\ \ \mathsf{g}_{i}\in G,\ \ \ \mathrm{r}\left(  \overrightarrow
{r}_{\overrightarrow{\mathsf{g}}}\left(  \alpha_{1,\ldots,\mathbf{m}_{r}%
}\right)  ,\overrightarrow{\mathsf{g}}\right)  \in\mathfrak{R}.\nonumber
\end{align}

\end{definition}

\begin{proposition}
The arity of addition in the polyadic group ring $\mathrm{R}^{\left[
\mathbf{m}_{r},\mathbf{n}_{r}\right]  }$ coincides with the arity of addition
in the initial polyadic ring $\mathcal{R}^{\left[  n_{r},m_{r}\right]  }$ that
is%
\begin{equation}
\mathbf{m}_{r}=m_{r}, \label{mrm}%
\end{equation}
if in both sides of (\ref{mm}) there is one polyadic operation (addition in
$\mathrm{R}^{\left[  \mathbf{m}_{r},\mathbf{n}_{r}\right]  }$ and addition in
$\mathcal{R}^{\left[  m_{r},n_{r}\right]  }$).
\end{proposition}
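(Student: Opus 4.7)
The plan is to derive the constraint $\mathbf{m}_{r}=m_{r}$ from a straightforward argument comparing the number of operands on the two sides of the defining identity (\ref{mm}), combined with the hypothesis that each side involves a single (non-iterated) polyadic operation.

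First, I would fix an arbitrary group element $\mathsf{g}_{i}$ appearing in the supports of the operands and read off the ``coefficient equation'' that (\ref{mm}) imposes componentwise: the coefficient of $\mathsf{g}_{i}$ on the left comes from a single application of $\mathbf{\nu}_{\mathfrak{R}}^{[\mathbf{m}_{r}]}$ to $\mathbf{m}_{r}$ formal sums, and each summand contributes exactly one scalar $r_{\mathsf{g}_{i}}(\alpha_{j})\in R$ at the slot indexed by $\mathsf{g}_{i}$. On the right the same coefficient is produced by a single application of $\nu_{R}^{[m_{r}]}$, which by definition has exactly $m_{r}$ input slots. Since these $m_{r}$ slots must be filled by the scalars $r_{\mathsf{g}_{i}}(\alpha_{1}),\ldots,r_{\mathsf{g}_{i}}(\alpha_{\mathbf{m}_{r}})$ coming one from each operand on the left, the two counts of arguments must agree, giving $\mathbf{m}_{r}=m_{r}$.

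Second, I would emphasise the role of the ``one polyadic operation on each side'' hypothesis by contrasting it with what happens if the hypothesis is dropped. By the admissibility rule (\ref{wl}), an $\ell_{\nu}$-fold iteration of $\nu_{R}^{[m_{r}]}$ accepts only $L^{\text{admiss}}(m_{r},\ell_{\nu})=\ell_{\nu}(m_{r}-1)+1$ arguments; the value $\ell_{\nu}=1$ recovers $\mathbf{m}_{r}=m_{r}$, while larger $\ell_{\nu}$ would produce the sequence $2m_{r}-1,\,3m_{r}-2,\ldots$ of admissible addition arities for $\mathrm{R}^{[\mathbf{m}_{r},\mathbf{n}_{r}]}$. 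Thus the stated hypothesis singles out precisely the $\ell_{\nu}=1$ branch of the more general ``quantization'' family, and this comparison shows the relation (\ref{mrm}) is the minimal one consistent with the arity freedom principle.

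Finally, for completeness I would check that the componentwise prescription (\ref{mm}) actually defines a totally well-posed $\mathbf{m}_{r}$-ary operation: since $\nu_{R}^{[m_{r}]}$ is totally associative and commutative by the ring axioms recorded after (\ref{mn}), the assignment $(\mathrm{r}(\alpha_{1}),\ldots,\mathrm{r}(\alpha_{\mathbf{m}_{r}}))\mapsto \mathbf{\nu}_{\mathfrak{R}}^{[\mathbf{m}_{r}]}[\ldots]$ is symmetric in its inputs and closes in $\mathfrak{R}$. The main obstacle is really only bookkeeping; there is no deeper content than counting arity slots, and the care needed is simply to articulate clearly why relaxing the ``single operation'' hypothesis forces one out of (\ref{mrm}) and into the iterated-quantization regime governed by (\ref{wl}).
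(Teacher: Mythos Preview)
Your proposal is correct and takes essentially the same approach as the paper: the paper's proof is the single sentence ``The statement (\ref{mrm}) directly follows from the construction (\ref{mm}),'' and your argument-counting at the $\mathsf{g}_{i}$-coefficient is precisely the unpacking of that construction. Your additional paragraphs on the iterated case and on well-posedness are extra commentary rather than a different method, and in fact anticipate the paper's subsequent Remarks \ref{rem-oper}--\ref{rem-q} and the higher-power addition (\ref{mml}).
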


\begin{proof}
The statement (\ref{mrm}) directly follows from the construction (\ref{mm}).
\end{proof}

\begin{remark}
\label{rem-oper}In the polyadic framework and from \textquotedblleft arity
freedom principle\textquotedblright\ \cite{duplij2022}, it follows that number
of operations in both sides of (\ref{mm}) can be different, such that the
arities of addition in the initial ring and the group ring may also differ,
but the total number of ring elements in brackets should remain the same.
\end{remark}

\begin{remark}
\label{rem-q}Denote the number of $m_{r}$-ary additions in in the initial
$\left(  m_{r},n_{r}\right)  $-ring $\mathcal{R}^{\left[  m_{r},n_{r}\right]
}$ by $\ell_{m}$, being actually the polyadic power, and their composition by
$\left(  \nu_{R}^{\left[  m_{r}\right]  }\right)  ^{\circ\ell_{m}}$, where the
total number of arguments is not arbitrary, as in the binary case, but
\textquotedblleft quantized\textquotedblright\ becoming%
\begin{equation}
\ell_{m}\left(  m_{r}-1\right)  +1. \label{ll}%
\end{equation}

\end{remark}

\begin{definition}
The higher (polyadic) power $\mathbf{m}_{r}$-ary addition in the polyadic
group ring $\mathrm{R}^{\left[  \mathbf{m}_{r},\mathbf{n}_{r}\right]  }$ can
be defined by analogy with (\ref{mm}) left-\textquotedblleft
componentwise\textquotedblright\ by%
\begin{align}
&  \mathbf{\nu}_{\mathfrak{R}}^{\left[  \mathbf{m}_{r}\right]  }\left[
\mathrm{r}\left(  \overrightarrow{r}_{\overrightarrow{\mathsf{g}}}\left(
\alpha_{1}\right)  ,\overrightarrow{\mathsf{g}}\right)  ,\ldots,\mathrm{r}%
\left(  \overrightarrow{r}_{\overrightarrow{\mathsf{g}}}\left(  \alpha
_{\mathbf{m}_{r}}\right)  ,\overrightarrow{\mathsf{g}}\right)  \right]
\nonumber\\
&  =\ \underset{i}{\mathbf{\Sigma}}\ \left(  \nu_{R}^{\left[  m_{r}\right]
}\right)  ^{\circ\ell_{m}}\left[  r_{\mathsf{g}_{i}}\left(  \alpha_{1}\right)
,\ldots,r_{\mathsf{g}_{i}}\left(  \alpha_{\ell_{m}\left(  m_{r}-1\right)
+1}\right)  \right]  \bullet\mathsf{g}_{i},\label{mml}\\
&  r_{\mathsf{g}_{i}}\left(  \alpha_{1,\ldots,\ell_{m}\left(  m_{r}-1\right)
+1}\right)  \in R,\ \ \mathsf{g}_{i}\in G,\ \ \ \mathrm{r}\left(
\overrightarrow{r}_{\overrightarrow{\mathsf{g}}}\left(  \alpha_{1,\ldots
,\mathbf{m}_{r}}\right)  ,\overrightarrow{\mathsf{g}}\right)  \in
\mathfrak{R}.\nonumber
\end{align}

\end{definition}

Therefore, we have

\begin{theorem}
The arity of addition $\mathbf{m}_{r}$ in the polyadic group ring
$\mathrm{R}^{\left[  \mathbf{m}_{r},\mathbf{n}_{r}\right]  }$ which possesses
a higher polyadic power of $m_{r}$-ary addition is%
\begin{equation}
\mathbf{m}_{r}=\ell_{m}\left(  m_{r}-1\right)  +1. \label{mlm}%
\end{equation}

\end{theorem}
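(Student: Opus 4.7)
The plan is to reduce the statement to the polyadic word-length quantization rule (\ref{wl}) by counting ring-coefficient arguments on each side of the defining equation (\ref{mml}). The key observation is that, in contrast to the multiplication case, the addition in $\mathrm{R}^{[\mathbf{m}_r,\mathbf{n}_r]}$ is declared componentwise in the group index $i$: the group basis elements $\mathsf{g}_i$ on the right-hand side match one-to-one with those implicit on the left-hand side, and no $n_g$-ary group operation $\mu_{\mathsf{G}}^{[\mathsf{n}_g]}$ is ever invoked. Consequently the problem of determining $\mathbf{m}_r$ collapses to an arity-matching at the level of a single component.

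First I would fix a group index $i$ and count the number of ring elements $r_{\mathsf{g}_i}(\alpha_k)$ that appear in slot $i$ on each side of (\ref{mml}). On the left, the $\mathbf{m}_r$-ary operation $\mathbf{\nu}_{\mathfrak{R}}^{[\mathbf{m}_r]}$ is applied to $\mathbf{m}_r$ formal sums, and its left-componentwise definition feeds exactly $\mathbf{m}_r$ coefficients into slot $i$. On the right, the inner operation is the iterated composition $\left(\nu_R^{[m_r]}\right)^{\circ \ell_m}$, which by the definition of the polyadic iterate in (\ref{ml}) is a well-formed polyadic operation precisely when its number of arguments equals the admissible length $L^{\text{admiss}}(m_r,\ell_m) = \ell_m(m_r-1)+1$ from (\ref{wl}).

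Equating the two counts gives $\mathbf{m}_r = \ell_m(m_r - 1) + 1$, which is exactly (\ref{mlm}). I would also note, as an internal consistency check, that setting $\ell_m = 1$ recovers the single-operation case $\mathbf{m}_r = m_r$ of the preceding proposition, so the new formula specializes correctly. Total associativity of the $m_r$-ary addition in $\mathcal{R}^{[m_r,n_r]}$ is what allows the unparenthesized expression $\left(\nu_R^{[m_r]}\right)^{\circ \ell_m}$ to be well-defined in (\ref{mml}) in the first place, so this hypothesis is used implicitly but is already guaranteed by the definition (\ref{mn}) of the initial ring.

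The only subtle point, rather than a real obstacle, is to verify that the arity of the group operation $\mathsf{n}_g$ plays no role here. This is transparent from (\ref{mml}): the group element $\mathsf{g}_i$ attached to each summand is unchanged by the operation, hence no $n_g$-ary quantization enters, and the arity freedom principle \cite{duplij2022} permits the free choice of $\ell_m$, which is precisely the polyadic power that parametrizes the admissible values of $\mathbf{m}_r$ in (\ref{mlm}).
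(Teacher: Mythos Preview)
Your proposal is correct and follows essentially the same approach as the paper's own proof, which simply cites the construction (\ref{mml}), Remarks \ref{rem-oper} and \ref{rem-q}, and the quantization condition (\ref{ll}). You have merely spelled out in detail the argument-counting that the paper leaves implicit: matching the $\mathbf{m}_r$ coefficients on the left of (\ref{mml}) against the $\ell_m(m_r-1)+1$ arguments required by the iterate $\left(\nu_R^{[m_r]}\right)^{\circ\ell_m}$ on the right.
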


\begin{proof}
The statement (\ref{mlm}) follows from the construction (\ref{mml}),
\textit{Remarks} \ref{rem-oper}, \ref{rem-q} and the \textquotedblleft
quantization\textquotedblright\ condition (\ref{ll}).
\end{proof}

The multiplication in $\mathrm{R}^{\left[  \mathbf{m}_{r},\mathbf{n}%
_{r}\right]  }$ can be defined similarly to the binary case in functional
notation (\ref{m2})

\begin{definition}
In the polyadic group ring $\mathcal{R}^{\left[  m_{r},n_{r}\right]  }\left[
\mathsf{G}^{\left[  \mathsf{n}_{g}\right]  }\right]  $ (\ref{rm}) the
multiplication $\mathbf{\mu}_{\mathfrak{R}}^{\left[  \mathbf{n}_{r}\right]
}:\mathfrak{R}^{\times\mathbf{n}_{r}}\rightarrow\mathfrak{R}$ can be defined
by the both-\textquotedblleft componentwise\textquotedblright%
\ convolution-like operation%
\begin{align}
&  \mathbf{\mu}_{\mathfrak{R}}^{\left[  \mathbf{n}_{r}\right]  }\left[
\mathrm{r}\left(  \overrightarrow{r}_{\overrightarrow{\mathsf{g}}}\left(
\alpha_{1}\right)  ,\overrightarrow{\mathsf{g}}\right)  ,\ldots,\mathrm{r}%
\left(  \overrightarrow{r}_{\overrightarrow{\mathsf{g}}}\left(  \alpha
_{\mathbf{n}_{r}}\right)  ,\overrightarrow{\mathsf{g}}\right)  \right]
\nonumber\\
&  =\underset{i_{1}}{\mathbf{\Sigma}}\ \ldots\ \underset{i_{n_{r}}%
}{\mathbf{\Sigma\ }}\underset{j_{1}}{\mathbf{\Sigma}}\ \ldots\ \underset
{j_{\mathsf{n}_{g}}}{\mathbf{\Sigma}}\mu_{R}^{\left[  n_{r}\right]  }\left[
r_{\mathsf{g}_{i_{1}}}\left(  \alpha_{1}\right)  ,\ldots,r_{\mathsf{g}%
_{i_{n_{r}}}}\left(  \alpha_{n_{r}}\right)  \right]  \bullet\mu_{\mathsf{G}%
}^{\left[  \mathsf{n}_{g}\right]  }\left[  \mathsf{g}_{j_{1}},\ldots
,\mathsf{g}_{j_{\mathsf{n}_{g}}}\right]  ,\label{mng}\\
&  r_{\mathsf{g}_{i}}\left(  \alpha_{j}\right)  \in R,\ \ \mathsf{g}_{i}\in
G,\ \ \ \mathrm{r}\left(  \overrightarrow{r}_{\overrightarrow{\mathsf{g}}%
}\left(  \alpha_{j}\right)  ,\overrightarrow{\mathsf{g}}\right)
\in\mathfrak{R},\nonumber
\end{align}
if polyadic power of all operations is $1$.
\end{definition}

\begin{proposition}
The arities of multiplications defined by (\ref{mng}) in $\mathcal{R}^{\left[
n_{r},m_{r}\right]  }$, $\mathsf{G}^{\left[  \mathsf{n}_{g}\right]  }$ and
$\mathrm{R}^{\left[  \mathbf{m}_{r},\mathbf{n}_{r}\right]  }$ coincide%
\begin{equation}
\mathbf{n}_{r}=n_{r}=\mathsf{n}_{g}. \label{nnn}%
\end{equation}

\end{proposition}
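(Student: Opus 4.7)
The plan is to verify the claimed equality by matching the number of arguments on both sides of the defining convolution-like formula (\ref{mng}), under the hypothesis stated in the definition that every multiplication symbol occurs with polyadic power equal to $1$.

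First I would count on the left-hand side. The outer operation $\mathbf{\mu}_{\mathfrak{R}}^{\left[\mathbf{n}_{r}\right]}$ consumes exactly $\mathbf{n}_{r}$ elements of $\mathfrak{R}$, each being a formal sum of the form (\ref{ra}). Expanding these $\mathbf{n}_{r}$ sums in the both-componentwise fashion already used for the binary case in (\ref{m2}) and reordering, each generic mixed term in the expansion selects exactly one ring coefficient and one group element from every input slot. Consequently it carries $\mathbf{n}_{r}$ ring factors and $\mathbf{n}_{r}$ group factors, which is precisely what the nested summations on the right-hand side of (\ref{mng}) are meant to encode.

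Next I would examine the right-hand side. There the ring multiplication $\mu_{R}^{\left[n_{r}\right]}$ absorbs $n_{r}$ ring arguments while the group multiplication $\mu_{\mathsf{G}}^{\left[\mathsf{n}_{g}\right]}$ absorbs $\mathsf{n}_{g}$ group arguments. By Remark \ref{rem-oper} the total counts of ring-valued and of group-valued arguments must agree between the two sides, and under the polyadic-power-$1$ assumption the admissible-length formula (\ref{wl}) collapses to the bare arities, forbidding any internal composition. Matching the ring-valued counts forces $\mathbf{n}_{r}=n_{r}$, and independently matching the group-valued counts forces $\mathbf{n}_{r}=\mathsf{n}_{g}$; combining these yields the claim.

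The argument is a direct counting exercise and I do not anticipate any genuine obstacle; the one point that demands care is the simultaneous bookkeeping of the ring and group summation indices in the expansion of (\ref{mng}), which is easily handled by treating the two systems of indices independently. The real content of the proposition is conceptual rather than technical: the equality (\ref{nnn}) is a \textbf{quantization} condition forced by the convolution structure, fully parallel to (\ref{mrm}) on the additive side, and relaxing the polyadic-power-$1$ hypothesis would replace it with a higher-power analogue in the spirit of (\ref{mlm}).
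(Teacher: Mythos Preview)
Your proposal is correct and follows essentially the same approach as the paper: both argue that the equality (\ref{nnn}) is forced directly by the structure of the both-``componentwise'' convolution (\ref{mng}), with you spelling out explicitly the argument-counting that the paper leaves implicit in a single sentence.
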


\begin{proof}
The statement (\ref{nnn}) follows directly from both-\textquotedblleft
componentwise\textquotedblright\ convolution (\ref{mng}).
\end{proof}

The construction of the multiplication is more elaborate and interesting, if
we take into account \textit{Remarks} \ref{rem-oper}, \ref{rem-q} and use the
"arity freedom principle" \cite{duplij2022} (initial arities are taken
arbitrary). Indeed, let us denote polyadic powers of $\mu_{R}^{\left[
n_{r}\right]  }$ and $\mu_{\mathsf{G}}^{\left[  \mathsf{n}_{g}\right]  }$ by
$\ell_{n}$ and $\ell_{g}$, correspondingly, then we have

\begin{definition}
The polyadic group ring with higher polyadic power of multiplications is
defined by%
\begin{align}
&  \mathbf{\mu}_{\mathfrak{R}}^{\left[  \mathbf{n}_{r}\right]  }\left[
\mathrm{r}\left(  \overrightarrow{r}_{\overrightarrow{\mathsf{g}}}\left(
\alpha_{1}\right)  ,\overrightarrow{\mathsf{g}}\right)  ,\ldots,\mathrm{r}%
\left(  \overrightarrow{r}_{\overrightarrow{\mathsf{g}}}\left(  \alpha
_{\mathbf{n}_{r}}\right)  ,\overrightarrow{\mathsf{g}}\right)  \right]
\label{mmr}\\
&  =\underset{\ i_{1}}{\mathbf{\Sigma}}\ \ldots\ \underset{\ell_{n}\left(
n_{r}-1\right)  +1}{\mathbf{\Sigma\ }}\underset{\ \ \ j_{1}}%
{\ \ \mathbf{\Sigma}}\ \ldots\ \underset{\ell_{g}\left(  \mathsf{n}%
_{g}-1\right)  +1}{\mathbf{\Sigma}}\left(  \mu_{R}^{\left[  n_{r}\right]
}\right)  ^{\circ\ell_{n}}\left[  r_{\mathsf{g}_{i_{1}}}\left(  \alpha
_{1}\right)  ,\ldots,r_{\mathsf{g}_{i_{\ell_{n}\left(  n_{r}-1\right)  +1}}%
}\left(  \alpha_{\ell_{n}\left(  n_{r}-1\right)  +1}\right)  \right]
\nonumber\\
&  \bullet\left(  \mu_{\mathsf{G}}^{\left[  \mathsf{n}_{g}\right]  }\right)
^{\ell_{\mathsf{n}_{g}}}\left[  \mathsf{g}_{j_{1}},\ldots,\mathsf{g}%
_{j_{\ell_{g}\left(  \mathsf{n}_{g}-1\right)  +1}}\right]  ,\ \ r_{\mathsf{g}%
_{i}}\left(  \alpha_{j}\right)  \in R,\ \ \mathsf{g}_{i}\in G,\ \ \ \mathrm{r}%
\left(  \overrightarrow{r}_{\overrightarrow{\mathsf{g}}}\left(  \alpha
_{j}\right)  ,\overrightarrow{\mathsf{g}}\right)  \in\mathfrak{R}.\nonumber
\end{align}

\end{definition}

The \textquotedblleft quantization\textquotedblright\ conditions for
multiplications in $\mathcal{R}^{\left[  n_{r},m_{r}\right]  }$ and
$\mathsf{G}^{\left[  \mathsf{n}_{g}\right]  }$, analogous to those for
additions in (\ref{ll}), now arise from the equality of the total number of
arguments in the equation (\ref{mmr}).%
\begin{equation}
\ell_{n}\left(  n_{r}-1\right)  +1=\ell_{g}\left(  \mathsf{n}_{g}-1\right)
+1. \label{lll}%
\end{equation}

Thus, we have

\begin{theorem}
The arity of multiplication $\mathbf{n}_{r}$ in the polyadic group ring
$\mathrm{R}^{\left[  \mathbf{m}_{r},\mathbf{n}_{r}\right]  }$ (with higher
polyadic powers of multiplications in the initial polyadic ring $\mu
_{R}^{\left[  n_{r}\right]  }$ and the $\mathsf{n}_{g}$-ary group
$\mu_{\mathsf{G}}^{\left[  \mathsf{n}_{g}\right]  }$) is%
\begin{equation}
\mathbf{n}_{r}=\ell_{n}\left(  n_{r}-1\right)  +1=\ell_{g}\left(
\mathsf{n}_{g}-1\right)  +1. \label{nln}%
\end{equation}

\end{theorem}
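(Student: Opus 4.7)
The plan is to mirror the argument used for the addition arity in the preceding theorem (see (\ref{mlm})), now applied to the two-sided convolution (\ref{mmr}). On the right hand side of that definition two independent iterated operations appear: the $\ell_{n}$-fold composition $\left(\mu_{R}^{\left[n_{r}\right]}\right)^{\circ\ell_{n}}$ of the ring multiplication and the $\ell_{g}$-fold composition $\left(\mu_{\mathsf{G}}^{\left[\mathsf{n}_{g}\right]}\right)^{\circ\ell_{g}}$ of the group multiplication, each acting on a string of ring elements, respectively group elements. First I would note that, by construction, the outer polyadic operation $\mathbf{\mu}_{\mathfrak{R}}^{\left[\mathbf{n}_{r}\right]}$ on the left has precisely $\mathbf{n}_{r}$ formal-sum arguments, and that inside each such formal sum every term has the shape $r_{\mathsf{g}_{i}}(\alpha_{j})\bullet\mathsf{g}_{i}$, coupling exactly one ring element to one group element.

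Next I would invoke the fundamental word-length quantization (\ref{wl}): an $\ell$-fold composition of an $n$-ary operation admits exactly $\ell(n-1)+1$ arguments. Applied to $\left(\mu_{R}^{\left[n_{r}\right]}\right)^{\circ\ell_{n}}$ this forces the admissible count $\ell_{n}(n_{r}-1)+1$ of ring-coefficient arguments; applied to $\left(\mu_{\mathsf{G}}^{\left[\mathsf{n}_{g}\right]}\right)^{\circ\ell_{g}}$ it forces the admissible count $\ell_{g}(\mathsf{n}_{g}-1)+1$ of group-element arguments. Because the both-componentwise convolution (\ref{mmr}) assigns to every ring factor exactly one group factor through the coupling $\bullet$, these two counts must coincide, which is precisely the compatibility condition (\ref{lll}). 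Identifying the outer arity $\mathbf{n}_{r}$ with this common admissible word length immediately yields (\ref{nln}).

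The main obstacle is not computational but conceptual: one must justify that the two composed operations inside (\ref{mmr}) share the same word length in the first place. This is where Remarks \ref{rem-oper}--\ref{rem-q} and the arity freedom principle enter, since they guarantee that the initial polyadic powers $\ell_{n}$ and $\ell_{g}$ can be chosen arbitrarily, subject only to the compatibility forced by the convolution structure; a priori they are unrelated, and only the pairing of ring coefficients with group basis elements in (\ref{ra}) ties them together. Once this pairing is justified by the coefficient-basis decomposition of the formal sums, the conclusion reduces to the two matching quantization identities, completing the proof.
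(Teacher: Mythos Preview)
Your proposal is correct and follows essentially the same approach as the paper: both derive (\ref{nln}) directly from the construction (\ref{mmr}) together with the quantization condition (\ref{lll}). The paper's proof is a single sentence pointing to these two ingredients, whereas you unpack the argument more fully by invoking (\ref{wl}) for each iterated operation and explaining why the coefficient--basis pairing forces the two word lengths to coincide; this is merely a more detailed rendering of the same reasoning.
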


\begin{proof}
The statement (\ref{nln}) follows from the construction (\ref{mmr}) and the
\textquotedblleft quantization\textquotedblright\ condition (\ref{lll}).
\end{proof}

Obviously, if all polyadic powers are equal to one $\ell_{n}=\ell_{g}=1$, then
all operations share the same arity (\ref{nnn}).

\begin{definition}
The polyadic group ring $\mathrm{R}^{\left[  \mathbf{m}_{r},\mathbf{n}%
_{r}\right]  }=\mathcal{R}^{\left[  m_{r},n_{r}\right]  }\left[
\mathsf{G}^{\left[  \mathsf{n}_{g}\right]  }\right]  $ (\ref{rm}) which has
initial multiplications $\mu_{R}^{\left[  n_{r}\right]  }$ and $\mu
_{\mathsf{G}}^{\left[  \mathsf{n}_{g}\right]  }$ of higher polyadic powers
(\ref{mmr}), is called a higher power polyadic group ring.
\end{definition}

\section{\textsc{Properties}}

Here we consider basic properties of the polyadic group rings, which are in
the higher arity case can be unusual and exotic.

In the binary case, the associativity of addition of the group ring
$\mathcal{R}\left[  \mathsf{G}\right]  $ trivially follows from the
associativity of addition in the initial ring $\mathcal{R}$ because of the the
left-\textquotedblleft componentwise\textquotedblright\ addition (\ref{rrg}).
The same conclusion is valid for polyadic additions $\mathbf{\nu
}_{\mathfrak{R}}^{\left[  \mathbf{m}_{r}\right]  }$ and $\nu_{R}^{\left[
m_{r}\right]  }$ with unit polyadic power (\ref{mm}).

\begin{proposition}
In case of higher polyadic powers (\ref{mml}) $\ell_{m}>1$ the total
associativity of addition in the polyadic group ring $\mathrm{R}^{\left[
\mathbf{m}_{r},\mathbf{n}_{r}\right]  }$ follows from the associativity of the
addition in the initial ring $\mathcal{R}^{\left[  m_{r},n_{r}\right]  }$, if
the strong inequality takes place%
\begin{equation}
\mathbf{m}_{r}>m_{r}.
\end{equation}

\end{proposition}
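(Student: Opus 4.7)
The plan is to reduce total associativity of $\mathbf{\nu}_{\mathfrak{R}}^{\left[\mathbf{m}_{r}\right]}$ to total associativity of the ambient composition $\bigl(\nu_{R}^{\left[m_{r}\right]}\bigr)^{\circ\ell_{m}}$ inside the initial ring, and then to invoke the standard iteration property of polyadic associativity. First, I would exploit the fact that by (\ref{mml}) the addition $\mathbf{\nu}_{\mathfrak{R}}^{\left[\mathbf{m}_{r}\right]}$ acts componentwise along the basis $\overrightarrow{\mathsf{g}}$; hence any nested expression $\mathbf{\nu}_{\mathfrak{R}}^{\left[\mathbf{m}_{r}\right]}\bigl[\ldots,\mathbf{\nu}_{\mathfrak{R}}^{\left[\mathbf{m}_{r}\right]}[\ldots],\ldots\bigr]$ reduces, coordinate by coordinate in $\mathsf{g}_{i}$, to a nested expression $\bigl(\nu_{R}^{\left[m_{r}\right]}\bigr)^{\circ\ell_{m}}\bigl[\ldots,\bigl(\nu_{R}^{\left[m_{r}\right]}\bigr)^{\circ\ell_{m}}[\ldots],\ldots\bigr]$ in $R$ with the inner block in the same position. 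Total associativity of $\mathbf{\nu}_{\mathfrak{R}}^{\left[\mathbf{m}_{r}\right]}$ is therefore equivalent to the claim that for every admissible position $1\leq p\leq\mathbf{m}_{r}$ of the inner block, the value of the corresponding $2\ell_{m}$-fold composition of $\nu_{R}^{\left[m_{r}\right]}$ is independent of $p$.

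Next, I would count and rebracket. Each outer block consumes $\mathbf{m}_{r}=\ell_{m}(m_{r}-1)+1$ arguments, so the whole nested expression involves $2\mathbf{m}_{r}-1=2\ell_{m}(m_{r}-1)+1$ elements of $R$ and exactly $2\ell_{m}$ applications of $\nu_{R}^{\left[m_{r}\right]}$, which matches the admissible word length (\ref{wl}) for a $2\ell_{m}$-fold composition of an $m_{r}$-ary operation. Since $\nu_{R}^{\left[m_{r}\right]}$ is totally associative in $\mathcal{R}^{\left[m_{r},n_{r}\right]}$, iterating its one-step invariance lets the inner bracket slide through any of the $m_{r}$ positions within a single factor of the composition, and then across the boundary between consecutive factors; chaining such one-step moves, any two admissible bracketings of the same ordered tuple coincide, and the desired position-independence follows.

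The main obstacle I anticipate lies in certifying that the inner $\mathbf{m}_{r}$-ary block can be moved through every one of the $\mathbf{m}_{r}$ positions of the outer bracket, not merely through the $m_{r}$ internal positions of a single $m_{r}$-ary factor. This is precisely where the strict inequality $\mathbf{m}_{r}>m_{r}$, equivalent to $\ell_{m}>1$, becomes essential: it guarantees that $\bigl(\nu_{R}^{\left[m_{r}\right]}\bigr)^{\circ\ell_{m}}$ contains at least two $m_{r}$-ary factors, so that the inner block can be transported between adjacent factors by a single application of $m_{r}$-ary associativity and, by induction on the number of such shifts, reach every admissible slot $1\leq p\leq\mathbf{m}_{r}$. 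If instead $\mathbf{m}_{r}=m_{r}$, i.e.\ $\ell_{m}=1$, the composition collapses to a single operation and the conclusion already follows directly from the componentwise definition, so the strict inequality is what selects the genuinely new higher-power regime covered by this proposition.
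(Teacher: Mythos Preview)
Your approach is correct and coincides with the paper's in spirit: both reduce the question componentwise to the initial ring and then invoke the fact that total associativity of $\nu_{R}^{[m_{r}]}$ makes any admissible rebracketing of a $2\ell_{m}$-fold composition unambiguous. The paper's own proof is a single sentence appealing to the quantization condition and the informal slogan ``larger brackets can be constructed from smaller brackets''; your version supplies the counting of arguments, the explicit sliding argument, and the role of $\ell_{m}>1$ that the paper leaves implicit, so you have in effect written out the details behind that slogan rather than taken a different route.
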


\begin{proof}
It follows from the \textquotedblleft quantization\textquotedblright%
\ condition (\ref{ll}) and the informal statement \textquotedblleft larger
brackets can be constructed from smaller brackets\textquotedblright.
\end{proof}

The connection between associativities of multiplications is more complicated
to prove.

\begin{theorem}
If the multiplication in the initial ring $\mathcal{R}^{\left[  m_{r}%
,n_{r}\right]  }$ is totally polyadic associative, then the polyadic group
ring $\mathrm{R}^{\left[  \mathbf{m}_{r},\mathbf{n}_{r}\right]  }$ is totally
associative multiplicatively, when all the arities of multiplication are equal%
\begin{equation}
\mathbf{n}_{r}=n_{r}=\mathsf{n}_{g}. \label{nn}%
\end{equation}

\end{theorem}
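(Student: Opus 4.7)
The plan is to verify by direct unfolding of definition (\ref{mng}) the $\mathbf{n}_r - 1$ position-invariance identities that characterize total polyadic associativity of $\mathbf{\mu}_{\mathfrak{R}}^{[\mathbf{n}_r]}$. Since the hypothesis $\mathbf{n}_r = n_r = \mathsf{n}_g$ forces a common arity for all three multiplications, the numbers of arguments on both sides of every candidate identity match automatically, and what remains is to compare the resulting convolutions term by term.

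First I would fix polyads $\mathfrak{x}, \mathfrak{y}, \mathfrak{z}$ of formal sums in $\mathfrak{R}$ of the lengths dictated by the associativity relation under inspection, and expand the inner composition $\mathbf{\mu}_{\mathfrak{R}}^{[\mathbf{n}_r]}[\mathfrak{y}]$ using (\ref{mng}) into a formal sum whose ring coefficients are values of $\mu_R^{[n_r]}$ and whose group parts are values of $\mu_{\mathsf{G}}^{[\mathsf{n}_g]}$. Substituting this sum back as one of the arguments of the outer $\mathbf{\mu}_{\mathfrak{R}}^{[\mathbf{n}_r]}$ and applying the convolution formula (\ref{mng}) once more yields an expression containing a nested $\mu_R^{[n_r]}$ inside the ring part and a nested $\mu_{\mathsf{G}}^{[\mathsf{n}_g]}$ inside the group part. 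Using the polyadic distributivity relations (\ref{dis1})--(\ref{dis3}) of $\mathcal{R}^{[m_r,n_r]}$ I would then pull every summation $\mathbf{\Sigma}$ outside, so that the whole composition becomes a single multi-indexed sum whose generic term factorizes as a ring piece $(\mu_R^{[n_r]})^{\circ 2}[\ldots]$ multiplied, via $\bullet$, by a group piece $(\mu_{\mathsf{G}}^{[\mathsf{n}_g]})^{\circ 2}[\ldots]$.

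The next step is to perform exactly the same expansion but with the inner $\mathbf{\mu}_{\mathfrak{R}}^{[\mathbf{n}_r]}$ inserted at a different slot of the outer operation. The resulting multi-indexed sum has identical index ranges and differs only in the placement of the inner $\mu_R^{[n_r]}$ inside the ring composition and, correspondingly, the placement of the inner $\mu_{\mathsf{G}}^{[\mathsf{n}_g]}$ inside the group composition. Total polyadic associativity of $\mu_R^{[n_r]}$, which is hypothesized, makes the ring piece invariant under this displacement; associativity of $\mu_{\mathsf{G}}^{[\mathsf{n}_g]}$, which holds automatically because $\mathsf{G}^{[\mathsf{n}_g]}$ is an $\mathsf{n}_g$-ary group, makes the group piece invariant. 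Summing termwise, the two iterated convolutions coincide, which is precisely the position invariance required.

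The main obstacle is the combinatorial bookkeeping. The nested convolution carries $\mathbf{n}_r$ ring indices plus $\mathbf{n}_r$ group indices per level, and after distributivity one must track carefully which index corresponds to which argument of the outer operation and which to the inner one before invoking associativity of $\mu_R^{[n_r]}$ and $\mu_{\mathsf{G}}^{[\mathsf{n}_g]}$ separately. A clean way to keep the accounting straight is to label each summation variable by a pair recording both the position within the outer polyad and the position within the basis $\overrightarrow{\mathsf{g}}$, and then rewrite the two expressions in a normal form before comparing them. The equality $\mathbf{n}_r = n_r = \mathsf{n}_g$ is indispensable in this reduction: without it the numbers of ring factors and of group factors appearing in the two iterated expressions would be incompatible and no term-by-term comparison could be made, which is exactly why the theorem has to include this arity-matching hypothesis and why the higher polyadic power situation of (\ref{nln}) requires the separate analysis already indicated in the preceding section.
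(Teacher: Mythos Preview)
Your proposal is correct and follows essentially the same route as the paper: expand both iterated products via the convolution definition (\ref{mng}), reduce each to a multi-indexed sum whose generic term is a nested ring product $\bullet$ nested group product, and then invoke total associativity of $\mu_R^{[n_r]}$ and of $\mu_{\mathsf{G}}^{[\mathsf{n}_g]}$ separately to identify the two expressions. The one point you make explicit that the paper leaves implicit is the appeal to the distributivity relations (\ref{dis1})--(\ref{dis3}) when pulling the inner formal sum through the outer convolution; this is indeed needed and your mention of it is an improvement in rigor, but it does not constitute a different strategy.
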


\begin{proof}
Using (\ref{mng}) in the notation (\ref{ra}), we compute $\mathbf{n}_{r}$
terms in the total associativity%
\begin{align}
\mathrm{r}\left(  \beta_{1}\right)   &  =\Sigma\ \mathbf{\mu}_{\mathfrak{R}%
}^{\left[  \mathbf{n}_{r}\right]  }\left[  \mathbf{\mu}_{\mathfrak{R}%
}^{\left[  \mathbf{n}_{r}\right]  }\left[  \mathrm{r}\left(  \alpha
_{1}\right)  ,\ldots,\mathrm{r}\left(  \alpha_{\mathbf{n}_{r}}\right)
\right]  ,\mathrm{r}\left(  \alpha_{\mathbf{n}_{r}+1}\right)  ,\ldots
,\mathrm{r}\left(  \alpha_{2\mathbf{n}_{r}-1}\right)  \right]  ,\\
\mathrm{r}\left(  \beta_{2}\right)   &  =\Sigma\ \mathbf{\mu}_{\mathfrak{R}%
}^{\left[  \mathbf{n}_{r}\right]  }\left[  \mathrm{r}\left(  \alpha
_{1}\right)  ,\mathbf{\mu}_{\mathfrak{R}}^{\left[  \mathbf{n}_{r}\right]
}\left[  \mathrm{r}\left(  \alpha_{2}\right)  ,\ldots,\mathrm{r}\left(
\alpha_{\mathbf{n}_{r}+1}\right)  \right]  ,\mathrm{r}\left(  \alpha
_{\mathbf{n}_{r}+2}\right)  ,\ldots,\mathrm{r}\left(  \alpha_{2\mathbf{n}%
_{r}-1}\right)  \right]  ,\\
&  \vdots\nonumber\\
\mathrm{r}\left(  \beta_{\mathbf{n}_{r}}\right)   &  =\Sigma\ \mathbf{\mu
}_{\mathfrak{R}}^{\left[  \mathbf{n}_{r}\right]  }\left[  \mathrm{r}\left(
\alpha_{1}\right)  ,\ldots,\mathrm{r}\left(  \alpha_{\mathbf{n}_{r}-1}\right)
,\mathbf{\mu}_{\mathfrak{R}}^{\left[  \mathbf{n}_{r}\right]  }\left[
\mathrm{r}\left(  \alpha_{\mathbf{n}_{r}}\right)  ,\ldots,\mathrm{r}\left(
\alpha_{2\mathbf{n}_{r}-1}\right)  \right]  \right]  ,
\end{align}
where $\Sigma$ denotes the sum by all corresponding internal indices. Then we
take into account the both-\textquotedblleft componentwise\textquotedblright%
\ convolution and total associativity of $\mathsf{n}_{g}$-ary group
$\mathsf{G}^{\left[  \mathsf{n}_{g}\right]  }$ to obtain%
\begin{align}
\mathrm{r}\left(  \beta_{1}\right)   &  =\Sigma\ \mu_{R}^{\left[
n_{r}\right]  }\left[  \mu_{R}^{\left[  n_{r}\right]  }\left[  r_{\mathsf{g}%
_{i_{1}}}\left(  \alpha_{1}\right)  ,\ldots,r_{\mathsf{g}_{i_{n_{r}}}}\left(
\alpha_{n_{r}}\right)  \right]  ,r_{\mathsf{g}_{i_{n_{r}}+1}}\left(
\alpha_{n_{r}+1}\right)  ,\ldots,r_{\mathsf{g}_{2i_{n_{r}}-1}}\left(
\alpha_{2n_{r}-1}\right)  \right] \nonumber\\
&  \bullet\left(  \mu_{\mathsf{G}}^{\left[  \mathsf{n}_{g}\right]  }\right)
^{\circ2}\left[  \mathsf{g}_{j_{1}},\ldots,\mathsf{g}_{2j_{\mathsf{n}_{g}}%
+1}\right]  ,\\
\mathrm{r}\left(  \beta_{2}\right)   &  =\Sigma\ \mu_{R}^{\left[
n_{r}\right]  }\left[  r_{\mathsf{g}_{i_{1}}}\left(  \alpha_{1}\right)
,\mu_{R}^{\left[  n_{r}\right]  }\left[  r_{\mathsf{g}_{i_{2}}}\left(
\alpha_{2}\right)  \ldots,r_{\mathsf{g}_{i_{n_{r}}+1}}\left(  \alpha_{n_{r}%
+1}\right)  \right]  ,r_{\mathsf{g}_{i_{n_{r}}+2}}\left(  \alpha_{n_{r}%
+2}\right)  ,\ldots,r_{\mathsf{g}_{2i_{n_{r}}-1}}\left(  \alpha_{2n_{r}%
-1}\right)  \right] \\
&  \bullet\left(  \mu_{\mathsf{G}}^{\left[  \mathsf{n}_{g}\right]  }\right)
^{\circ2}\left[  \mathsf{g}_{j_{1}},\ldots,\mathsf{g}_{2j_{\mathsf{n}_{g}}%
+1}\right]  ,\\
&  \vdots\\
\mathrm{r}\left(  \beta_{\mathbf{n}_{r}}\right)   &  =\Sigma\ \mu_{R}^{\left[
n_{r}\right]  }\left[  r_{\mathsf{g}_{i_{1}}}\left(  \alpha_{1}\right)
,\ldots,r_{\mathsf{g}_{i_{n_{r}-1}}}\left(  \alpha_{n_{r}-1}\right)  ,\mu
_{R}^{\left[  n_{r}\right]  }\left[  r_{\mathsf{g}_{i_{n_{r}}}}\left(
\alpha_{n_{r}}\right)  ,\ldots,r_{\mathsf{g}_{2i_{n_{r}}-1}}\left(
\alpha_{2n_{r}-1}\right)  \right]  \right] \\
&  \bullet\left(  \mu_{\mathsf{G}}^{\left[  \mathsf{n}_{g}\right]  }\right)
^{\circ2}\left[  \mathsf{g}_{j_{1}},\ldots,\mathsf{g}_{2j_{\mathsf{n}_{g}}%
+1}\right]  ,
\end{align}
where the (group dependence) terms on r.h.s. of the formal product $\left(
\bullet\right)  $ coincide after suitable rename of the summation indices. All
the terms on the l.h.s. are equal due to the total polyadic associativity in
the initial ring $\mathcal{R}^{\left[  m_{r},n_{r}\right]  }$. Thus, the
polyadic group ring $\mathrm{R}^{\left[  \mathbf{m}_{r},\mathbf{n}_{r}\right]
}=\mathcal{R}^{\left[  m_{r},n_{r}\right]  }\left[  \mathsf{G}^{\left[
\mathsf{n}_{g}\right]  }\right]  $ (\ref{rm}) is multiplicatively totally
associative for equal arities (\ref{nn}).
\end{proof}

\begin{theorem}
In case of higher polyadic powers (\ref{mml}) $\ell_{n}>1$ and/or $\ell_{g}>1$
the total associativity of multiplication in the polyadic group ring
$\mathrm{R}^{\left[  \mathbf{m}_{r},\mathbf{n}_{r}\right]  }=\mathcal{R}%
^{\left[  m_{r},n_{r}\right]  }\left[  \mathsf{G}^{\left[  \mathsf{n}%
_{g}\right]  }\right]  $ follows from the associativity of the addition in the
initial ring $\mathcal{R}^{\left[  m_{r},n_{r}\right]  }$, if the strong
inequalities take place%
\begin{equation}
\mathbf{n}_{r}>n_{r}\cup\mathbf{n}_{r}>n_{g}.
\end{equation}

\end{theorem}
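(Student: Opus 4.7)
The plan is to reduce this statement, just as in the analogous result for addition, to the informal principle that \emph{larger brackets can be constructed from smaller brackets}. First, I would invoke the quantization condition (\ref{lll}), which forces $\mathbf{n}_{r}=\ell_{n}(n_{r}-1)+1=\ell_{g}(\mathsf{n}_{g}-1)+1$, so that the strong inequalities $\mathbf{n}_{r}>n_{r}$ and $\mathbf{n}_{r}>\mathsf{n}_{g}$ in the hypothesis correspond precisely to $\ell_{n}>1$ and $\ell_{g}>1$ respectively, i.e., to the regime in which the multiplications in $\mathcal{R}^{[m_{r},n_{r}]}$ and $\mathsf{G}^{[\mathsf{n}_{g}]}$ genuinely enter the convolution (\ref{mmr}) via higher polyadic powers rather than as single operations.

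Next, I would appeal to the general fact, already present in the paper in the form (\ref{ml})--(\ref{wl}), that if $\mu_{n}$ is totally polyadically associative on its underlying set, then its iterated composition $\mu_{n}^{\circ\ell}$ is itself a totally associative $(\ell(n-1)+1)$-ary operation; this is the technical content of the phrase \textquotedblleft omission of parentheses in compositions\textquotedblright. Applying this to $\mu_{R}^{[n_{r}]}$ with $\ell=\ell_{n}$ (using the hypothesis of total associativity in $\mathcal{R}^{[m_{r},n_{r}]}$) and to $\mu_{\mathsf{G}}^{[\mathsf{n}_{g}]}$ with $\ell=\ell_{g}$ (which is automatic for the $\mathsf{n}_{g}$-ary group), one obtains two totally associative operations at the common arity $\mathbf{n}_{r}$.

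Having established associativity of the two composed factors, I would then run the same argument as in the proof of the preceding theorem (the case $\ell_{n}=\ell_{g}=1$): write out the $\mathbf{n}_{r}$ expressions $\mathrm{r}(\beta_{k})$ obtained by placing $\mathbf{\mu}_{\mathfrak{R}}^{[\mathbf{n}_{r}]}$ inside itself in each of the $\mathbf{n}_{r}$ admissible positions, push the outer bracket through the both-componentwise convolution (\ref{mmr}), so that on the ring side one sees nested compositions of $(\mu_{R}^{[n_{r}]})^{\circ\ell_{n}}$ and on the group side nested compositions of $(\mu_{\mathsf{G}}^{[\mathsf{n}_{g}]})^{\circ\ell_{g}}$. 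After a suitable renaming of the internal summation indices $i_{k},j_{k}$, the ring factors coincide by total associativity of $(\mu_{R}^{[n_{r}]})^{\circ\ell_{n}}$, the group factors coincide by total associativity of $(\mu_{\mathsf{G}}^{[\mathsf{n}_{g}]})^{\circ\ell_{g}}$, and therefore all the $\mathrm{r}(\beta_{k})$ are equal.

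The main obstacle I anticipate is bookkeeping: in contrast with the unit-power proof, the double convolution (\ref{mmr}) now contains internal sums at two levels (within each composed operation and between the two placements of the outer bracket), and one must be careful that the quantization condition (\ref{lll}) makes the index counts on the ring and on the group side genuinely compatible in both $\mathrm{r}(\beta_{k})$ expressions. Once the indices are aligned and the associativity of the composed operations is invoked separately on each $\bullet$-factor, however, the argument collapses to exactly the pattern of the previous theorem and the conclusion follows.
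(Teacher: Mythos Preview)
Your proposal is correct and follows essentially the same approach as the paper: invoke the quantization condition (\ref{lll}) and the informal principle that ``larger brackets can be constructed from smaller brackets'' (i.e., that the iterated operations $(\mu_{R}^{[n_{r}]})^{\circ\ell_{n}}$ and $(\mu_{\mathsf{G}}^{[\mathsf{n}_{g}]})^{\circ\ell_{g}}$ inherit total associativity at the common arity $\mathbf{n}_{r}$). In fact, the paper's own proof consists of nothing more than those two remarks, so your detailed reduction to the argument of the preceding theorem is a genuine fleshing-out of what the paper leaves implicit.
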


\begin{proof}
It follows from the \textquotedblleft quantization\textquotedblright%
\ condition (\ref{lll}) and the informal consequence \textquotedblleft larger
brackets can be constructed from smaller brackets\textquotedblright.
\end{proof}

The polyadic distributivity in $\mathrm{R}^{\left[  \mathbf{m}_{r}%
,\mathbf{n}_{r}\right]  }$ is governed by polyadic distributivity of the
initial ring $\mathcal{R}^{\left[  m_{r},n_{r}\right]  }$, because the
addition is present only in the l.h.s. of the polyadic group ring elements
(\ref{rg}).

By the same reason, if the initial ring $\mathcal{R}^{\left[  m_{r}%
,n_{r}\right]  }$ has the polyadic zero $z_{R}$, then the group ring
$\mathrm{R}^{\left[  \mathbf{m}_{r},\mathbf{n}_{r}\right]  }$ has the polyadic
zero $\mathrm{z}_{\mathfrak{R}}$ of the form%
\begin{equation}
\mathrm{z}_{\mathfrak{R}}=z_{R}\bullet\underset{i}{\mathbf{\Sigma}%
}\ \mathsf{g}_{i},\ \ \ \ z_{R}\in R,\ \ \mathsf{g}_{i}\in G,\ \ \mathrm{z}%
_{\mathfrak{R}}\in\mathfrak{R}, \label{zrr}%
\end{equation}
such that no group elements appear with nonzero coefficients, since the finite
support, and such element is unique. Also, the zero $\mathrm{z}_{\mathfrak{R}%
}$ in $\mathrm{R}^{\left[  \mathbf{m}_{r},\mathbf{n}_{r}\right]  }$ is the
additive polyadic identity and is multiplicatively absorbing (for
$\mathcal{R}^{\left[  m_{r},n_{r}\right]  }$ see (\ref{zr}) and (\ref{zrm}))%
\begin{align}
\mathbf{\nu}_{\mathfrak{R}}^{\left[  \mathbf{m}_{r}\right]  }\left[
\mathrm{r}\left(  \alpha\right)  ,\overset{m_{r}-1}{\overbrace{\mathrm{z}%
_{\mathfrak{R}},\ldots,\mathrm{z}_{\mathfrak{R}}}}\right]   &  =\mathrm{r}%
\left(  \alpha\right)  ,\\
\mu_{R}^{\left[  \mathbf{n}_{r}\right]  }\left[  \mathrm{r}\left(  \alpha
_{1}\right)  ,\mathrm{r}\left(  \alpha_{2}\right)  ,\ldots,\mathrm{r}\left(
\alpha_{n_{r}-1}\right)  ,\mathrm{z}_{\mathfrak{R}}\right]   &  =\mathrm{z}%
_{\mathfrak{R}},\ \ \ \ \ \mathrm{r}\left(  \alpha\right)  ,\mathrm{r}\left(
\alpha_{i}\right)  ,\mathrm{z}_{\mathfrak{R}}\in\mathfrak{R}.
\end{align}

In the polyadic case, the identity of multiplication is only a neutral element
(\ref{eg}) and has no connection with invertibility (see, e.g.
\cite{duplij2022}). If the initial $\left(  m_{r},n_{r}\right)  $-ring
$\mathcal{R}^{\left[  m_{r},n_{r}\right]  }$ has the polyadic identity $e_{R}$
(\ref{er}) and the identity of the $n_{g}$-ary group $\mathsf{G}^{\left[
\mathsf{n}_{g}\right]  }$ (\ref{gn}) is $\mathsf{e}_{\mathsf{G}}$, then the
trivial polyadic identity $\mathrm{e}_{\mathfrak{R}}$ of the group ring
$\mathrm{R}^{\left[  \mathbf{m}_{r},\mathbf{n}_{r}\right]  }$ is%
\begin{equation}
\mathrm{e}_{\mathfrak{R}}=e_{R}\bullet\mathsf{e}_{\mathsf{G}}%
,\ \ \ \ \ \ \ e_{R}\in R,\ \ \mathsf{e}_{\mathsf{G}}\in G,\ \ \mathrm{e}%
_{\mathfrak{R}}\in\mathfrak{R}, \label{ee}%
\end{equation}
such that in the sum (\ref{ra}) one coefficient from $\mathcal{R}^{\left[
m_{r},n_{r}\right]  }$ at the group identity $\mathsf{e}_{\mathsf{G}}$ is
$e_{R}$, while others are equal to $z_{R}$ and therefore are not written here.

The invertibility properties of polyadic structures are governed not by
neutral elements, but by querelements \cite{duplij2022}. So for elements from
multiplicative $\mathfrak{n}_{u}$-ary unit group $\mathit{U}^{\left[
\mathfrak{n}_{u}\right]  }\left[  \mathcal{R}^{\left[  m_{r},n_{r}\right]
}\right]  $ (the subset $\emph{U}_{R}\subset R$ having the querelement
$\bar{r}$), and the $n_{g}$-ary group $\mathsf{G}^{\left[  \mathsf{n}%
_{g}\right]  }$ (\ref{gn}) having the querelement $\mathsf{\bar{g}}$
(\ref{gg}) for each $\mathsf{g}\in\mathsf{G}$, we can formulate

\begin{definition}
In the group ring $\mathrm{R}^{\left[  \mathbf{m}_{r},\mathbf{n}_{r}\right]
}$ the querelement $\overline{\mathrm{r}}\left(  \alpha\right)  $ for some
elements $\mathrm{r}\left(  \alpha\right)  \in\mathfrak{R}$ is defined by%
\begin{equation}
\overline{\mathrm{r}}\left(  \alpha\right)  =\underset{i}{\mathbf{\Sigma}%
}\ \bar{r}_{\mathsf{g}_{i}}\left(  \alpha\right)  \bullet\mathsf{\bar{g}}%
_{i},\ \ \ \ \ \ \ \bar{r}\in\emph{U},\ \ \mathsf{\bar{g}}_{i},\mathsf{g}%
_{i}\in G,\ \ \overline{\mathrm{r}}\in\mathfrak{R},
\end{equation}
where in the simplest case $\bar{r}$ and $\mathsf{\bar{g}}$ are defined in
(\ref{qm}) and (\ref{gg}), respectively, if all arities coincide (\ref{nn}).
\end{definition}

In the nontrivial approach, the r.h.s. can contain also more general elements
for which one should solve system of equations in each concrete case. We
denote the subset of multiplicatively invertible elements in the group ring
$\mathrm{R}^{\left[  \mathbf{m}_{r},\mathbf{n}_{r}\right]  }$ (sometimes
called group units) by $\emph{U}_{\mathfrak{R}}\in\mathfrak{R}$ which should
form the $\mathbf{n}_{u}$-ary group $\mathit{U}^{\left[  \mathbf{n}%
_{u}\right]  }\left(  \mathrm{R}^{\left[  \mathbf{m}_{r},\mathbf{n}%
_{r}\right]  }\right)  $, or a polyadic unit group.

\begin{definition}
The polyadic augmentation map $\mathbf{\varepsilon}:\mathcal{R}^{\left[
m_{r},n_{r}\right]  }\left[  \mathsf{G}^{\left[  \mathsf{n}_{g}\right]
}\right]  \rightarrow\mathcal{R}^{\left[  m_{r},n_{r}\right]  }$ can be
defined for $\alpha$th element (\ref{ra}) of $\mathrm{R}^{\left[
\mathbf{m}_{r},\mathbf{n}_{r}\right]  }$ as follows%
\begin{equation}
\underset{i=1}{\mathbf{\Sigma}^{i_{\max}}}\ r_{\mathsf{g}_{i}}\left(
\alpha\right)  \bullet\mathsf{g}_{i}\mapsto\left(  \nu_{R}^{\left[
m_{r}\right]  }\right)  ^{\circ\ell_{i}}\left[  r_{\mathsf{g}_{1}}\left(
\alpha\right)  ,\ldots,r_{\mathsf{g}_{\ell_{i}\left(  m_{r}-1\right)  +1}%
}\left(  \alpha\right)  \right]  ,\ \ \ \ r_{\mathsf{g}_{i}}\left(
\alpha\right)  \in\emph{U}_{R},\ \ \mathsf{g}_{i}\in G. \label{aug}%
\end{equation}

\end{definition}

\begin{remark}
Note that on the l.h.s. of (\ref{aug}) we have the formal sum $\Sigma$ by $i$,
if finite, then till $i_{\max}$, while on the r.h.s. the sum becomes $m_{r}%
$-ary addition in the initial $\left(  m_{r},n_{r}\right)  $-ring
$\mathcal{R}^{\left[  m_{r},n_{r}\right]  }$, therefore we have the
\textquotedblleft quantization\textquotedblright\ condition for the polyadic
augmentation%
\begin{equation}
i_{\max}=i_{\max}\left(  \ell_{i},m_{r}\right)  =\ell_{i}\left(
m_{r}-1\right)  +1, \label{il}%
\end{equation}
where $\ell_{i}$ is the polyadic power of the initial ring $\mathcal{R}%
^{\left[  m_{r},n_{r}\right]  }$ addition.
\end{remark}

\begin{definition}
The kernel of the polyadic augmentation map is called a polyadic augmentation
ideal and is defined by setting coefficients sum in (\ref{aug}) equal to the
polyadic zero (\ref{zrr}), as follows%
\begin{equation}
\ker\varepsilon=\left\langle \mathfrak{R}\mid\left(  \nu_{R}^{\left[
m_{r}\right]  }\right)  ^{\circ\ell_{i}}\left[  r_{\mathsf{g}_{1}}\left(
\alpha\right)  ,\ldots,r_{\mathsf{g}_{\ell_{i}\left(  m_{r}-1\right)  +1}%
}\left(  \alpha\right)  \right]  =\mathrm{z}_{\mathfrak{R}}\right\rangle
,\ \ \ \ r_{\mathsf{g}_{i}}\left(  \alpha\right)  \in\emph{U}_{R}%
,\ \ \mathsf{g}_{i}\in G. \label{ke}%
\end{equation}

\end{definition}

The polyadic augmentation map $\mathbf{\varepsilon}$ preserves addition and
multiplication without changing the arities and maps the corresponding
identities in $\mathcal{R}^{\left[  m_{r},n_{r}\right]  }\left[
\mathsf{G}^{\left[  \mathsf{n}_{g}\right]  }\right]  $ and $\mathcal{R}%
^{\left[  m_{r},n_{r}\right]  }$.

\section{\textsc{Examples}}

Let us present simple, but nontrivial examples of the polyadic group rings
$\mathrm{R}^{\left[  \mathbf{m}_{r},\mathbf{n}_{r}\right]  }=\mathcal{R}%
^{\left[  m_{r},n_{r}\right]  }\left[  \mathsf{G}^{\left[  \mathsf{n}%
_{g}\right]  }\right]  $ and list their main properties. First, we present in
detail the example of the polyadic power equal to one, then briefly show
higher polyadic powers in addition and multiplication separately.

\begin{example}
\label{exam-R23}We take for the initial ring the commutative nonderived
$\left(  2,3\right)  $-ring with the underlying set $R=\mathfrak{j}\mathbb{Z}$
($\mathfrak{j}^{2}=-1$), operations are in $\mathbb{C}$. Now $\nu_{R}^{\left[
2\right]  }$ and $\mu_{R}^{\left[  3\right]  }$are usual addition and product.
Note that $\left\langle \mathfrak{j}\mathbb{Z}\mid\nu_{R}^{\left[  2\right]
}\right\rangle $ is binary group with respect to addition, and $\left\langle
\mathfrak{j}\mathbb{Z}\mid\mu_{R}^{\left[  3\right]  }\right\rangle $ is not a
ternary group, but only a ternary semigroup, because there is no
multiplicative querelement for each $r\in R$. Obviously, that $\mathcal{R}%
^{\left[  2,3\right]  }$ is unitless. Note that polyadic distributivity
follows from the binary distributivity in $\mathbb{Z}$. Therefore,%
\begin{equation}
\mathcal{R}^{\left[  2,3\right]  }=\left\langle \mathfrak{j}\mathbb{Z}\mid
\nu_{R}^{\left[  2\right]  },\mu_{R}^{\left[  3\right]  },z_{R}\right\rangle
\label{r23}%
\end{equation}
is a commutative nonderived $\left(  2,3\right)  $-ring without multiplicative
neutral element and $z_{R}=\mathfrak{j}0=0$.

Let $\mathit{C}_{3}=\left\langle \left\{  e,a,a^{2}\right\}  \mid a^{3}%
=a^{0}=e\right\rangle $ be the cyclic group of order $3$ with the identity $e$
and one generator $a$. We take for the initial polyadic group $\mathsf{G}%
^{\left[  \mathsf{n}_{g}\right]  }$ the finite set of $2\times2$ antidiagonal
symbolic matrices $G=\mathfrak{adiag}\left(  \mathit{C}_{3},\mathit{C}%
_{3}\right)  $, which is closed with respect to triple matrix multiplication.
The element of $G$ can be presented in the form%
\begin{equation}
\mathsf{g}_{i}=g\left(  m,n\right)  =\left(
\begin{array}
[c]{cc}%
0 & a^{m}\\
a^{n} & 0
\end{array}
\right)  , \label{gi}%
\end{equation}
where $m,n\in\mathbb{Z}\operatorname{mod}3$, such that the cardinality
$\left\vert G\right\vert =9$, and the manifest form of the elements (\ref{gi})
are%
\begin{equation}%
\begin{tabular}
[c]{ccc}%
$\mathsf{g}_{1}=\left(
\begin{array}
[c]{cc}%
0 & e\\
e & 0
\end{array}
\right)  ,$ & $\mathsf{g}_{2}=\left(
\begin{array}
[c]{cc}%
0 & a\\
e & 0
\end{array}
\right)  ,$ & $\mathsf{g}_{3}=\left(
\begin{array}
[c]{cc}%
0 & a^{2}\\
e & 0
\end{array}
\right)  ,$\\
$\mathsf{g}_{4}=\left(
\begin{array}
[c]{cc}%
0 & e\\
a & 0
\end{array}
\left(
\begin{array}
[c]{cc}%
0 & a^{2}\\
e & 0
\end{array}
\right)  \right)  ,$ & $\mathsf{g}_{5}=\left(
\begin{array}
[c]{cc}%
0 & a\\
a & 0
\end{array}
\right)  ,$ & $\mathsf{g}_{6}=\left(
\begin{array}
[c]{cc}%
0 & a^{2}\\
a & 0
\end{array}
\right)  ,$\\
$\mathsf{g}_{7}=\left(
\begin{array}
[c]{cc}%
0 & e\\
a^{2} & 0
\end{array}
\right)  ,$ & $\mathsf{g}_{8}=\left(
\begin{array}
[c]{cc}%
0 & a\\
a^{2} & 0
\end{array}
\right)  ,$ & $\mathsf{g}_{9}=\left(
\begin{array}
[c]{cc}%
0 & a^{2}\\
a^{2} & 0
\end{array}
\right)  .$%
\end{tabular}
\ \label{g1}%
\end{equation}

The ternary multiplication $\mu_{\mathsf{G}}^{\left[  \mathsf{3}\right]  }$ is
nonderived (any even product gives a diagonal matrix that is out of the set
$G=\mathfrak{adiag}$), ternary noncommutative and has the form%
\begin{equation}
\mu_{\mathsf{G}}^{\left[  \mathsf{3}\right]  }\left[  \mathsf{g}%
_{i},\mathsf{g}_{j},\mathsf{g}_{k}\right]  =g\left(  m_{1},n_{1}\right)
g\left(  m_{2},n_{2}\right)  g\left(  m_{3},n_{3}\right)  =g\left(
m_{1}+n_{2}+m_{3},n_{1}+m_{2}+n_{3}\right)  . \label{ggg}%
\end{equation}

The ternary identity $\mathsf{e}_{\mathsf{G}}$ (\ref{eg}) is defined by%
\begin{equation}
\mu_{\mathsf{G}}^{\left[  \mathsf{3}\right]  }\left[  \mathsf{e}_{\mathsf{G}%
},\mathsf{e}_{\mathsf{G}},\mathsf{g}\right]  =\mathsf{g}.
\end{equation}
Using (\ref{gi}) and (\ref{ggg}) we obtain the identity in matrix form%
\begin{equation}
\mathsf{e}_{\mathsf{G}}=\mathfrak{e}\left(  t\right)  =\left(
\begin{array}
[c]{cc}%
0 & a^{t}\\
a^{3-t} & 0
\end{array}
\right)  ,\ \ \ \ \ \ t=\mathbb{Z}\operatorname{mod}3.
\end{equation}
So the ternary group $\mathsf{G}^{\left[  \mathsf{3}\right]  }$ has $3$
identities%
\begin{equation}
\mathsf{e}_{\mathsf{G},1}=\mathfrak{e}\left(  0\right)  =\left(
\begin{array}
[c]{cc}%
0 & e\\
e & 0
\end{array}
\right)  =\mathsf{g}_{1},\ \mathsf{e}_{\mathsf{G},2}=\mathfrak{e}\left(
1\right)  =\left(
\begin{array}
[c]{cc}%
0 & a\\
a^{2} & 0
\end{array}
\right)  =\mathsf{g}_{9},\ \mathsf{e}_{\mathsf{G},3}=\mathfrak{e}\left(
2\right)  =\left(
\begin{array}
[c]{cc}%
0 & a^{2}\\
a & 0
\end{array}
\right)  =\mathsf{g}_{6}.\label{ei}%
\end{equation}
Each element in $\mathsf{G}^{\left[  \mathsf{3}\right]  }$ has its querelement
$\mathsf{\bar{g}}\left(  \mathsf{g}\right)  $ or $\bar{g}\left(  m,n\right)
=\overline{g\left(  m,n\right)  }$ defined by%
\begin{equation}
\mu_{\mathsf{G}}^{\left[  \mathsf{3}\right]  }\left[  \mathsf{g}%
,\mathsf{g},\mathsf{\bar{g}}\right]  =\mathsf{g},\ \ \ g\left(  m,n\right)
g\left(  m,n\right)  \bar{g}\left(  m,n\right)  =g\left(  m,n\right)  ,
\end{equation}
and in the matrix form%
\begin{equation}
\mathsf{\bar{g}}_{i}=\bar{g}\left(  m,n\right)  =\left(
\begin{array}
[c]{cc}%
0 & a^{3-n}\\
a^{3-m} & 0
\end{array}
\right)  ,\ \ \ \ m,n=\mathbb{Z}\operatorname{mod}3.\label{gq}%
\end{equation}

Note that each element of $\mathsf{G}^{\left[  \mathsf{3}\right]  }$ is
polyadic multiplicative idemponent (\ref{id}) because from (\ref{ggg}) it
follows%
\begin{equation}
\mathsf{g}^{\left\langle 3\right\rangle }=\left(  \mu_{\mathsf{G}}^{\left[
\mathsf{3}\right]  }\right)  ^{\circ3}\left[  \mathsf{g}^{7}\right]
=\mathsf{g},\ \ \ \ \ \ \forall\mathsf{g}\in G.
\end{equation}
So during \textquotedblleft polyadization\textquotedblright\ $\mathit{C}%
_{3}\rightarrow adiag\left(  \mathit{C}_{3},\mathit{C}_{3}\right)  $ the
binary cyclic group of order $3$  becomes the ternary group of  idempotents of
polyadic power $3$.

Thus, we have that%
\begin{equation}
\mathsf{G}^{\left[  \mathsf{3}\right]  }=\left\langle \left\{  \mathsf{g}%
_{i}\right\}  \mid\mu_{\mathsf{G}}^{\left[  \mathsf{3}\right]  }%
,\mathsf{e}_{\mathsf{G},i},\overline{\left(  \ \right)  }\right\rangle ,
\label{g3}%
\end{equation}
where the queroperation $\overline{\left(  \ \right)  }$ is defined in
(\ref{gq}), becomes the ternary group having $3$ identities (\ref{ei}).

The polyadic group $\left(  \mathbf{2,3}\right)  $-ring $\mathcal{R}^{\left[
2,3\right]  }\left[  \mathsf{G}^{\left[  \mathsf{3}\right]  }\right]
=\mathrm{R}^{\left[  \mathbf{2,3}\right]  }$, as the set $\mathfrak{R}$
consists of the finite formal sums (\ref{rg}), and the $\alpha$th element of
$\mathfrak{R}$ has the form
\begin{equation}
\mathrm{r}\left(  \alpha\right)  =\ \underset{i}{\mathbf{\Sigma}}%
\ r_{i}\left(  \alpha\right)  \bullet\mathsf{g}_{i},\ \ \ \ \ \ \ r_{i}\left(
\alpha\right)  \in R,\ \ \ \mathsf{g}_{i}\in G,\ \ \mathrm{r}\left(
\alpha\right)  \in\mathfrak{R}, \label{rag}%
\end{equation}
where $r_{i}\left(  \alpha\right)  \in j\mathbb{Z}$ and $\mathsf{g}_{i}$ are
defined in (\ref{gi}). Manifestly, we obtain%
\begin{equation}
\mathrm{r}\left(  \alpha\right)  =\ \underset{i}{\mathbf{\Sigma}%
}\ \mathfrak{j}k_{i}\left(  \alpha\right)  \bullet\left(
\begin{array}
[c]{cc}%
0 & a^{m_{i}}\\
a^{n_{i}} & 0
\end{array}
\right)  ,\ \ \ \ m_{i},n_{i}=\mathbb{Z}\operatorname{mod}3,\ \ k_{i}\left(
\alpha\right)  \in\mathbb{Z}. \label{ra1}%
\end{equation}
There is no identity in the polyadic group ring $\mathcal{R}^{\left[
2,3\right]  }\left[  \mathsf{G}^{\left[  \mathsf{3}\right]  }\right]  $,
because $\left(  2,3\right)  $-ring $\mathcal{R}^{\left[  2,3\right]  }$ is
unitless, but zero is%
\begin{equation}
\mathrm{z}_{\mathfrak{R}}=z_{R}\bullet\mathsf{g},\ \ \ \ \ \ \ \ z_{R}%
=\mathfrak{j}0=0\in\mathbb{Z},\ \ \ \mathsf{g}\in\mathsf{G}^{\left[
\mathsf{3}\right]  }.
\end{equation}

The commutative binary addition $\mathbf{\nu}_{\mathfrak{R}}^{\left[
\mathbf{2}\right]  }$ in $\mathrm{R}^{\left[  \mathbf{2,3}\right]  }$ is
simply \textquotedblleft left\textquotedblright-componentwise (\ref{mml}) and
reduces to the binary addition $\nu_{R}^{\left[  2\right]  }$ in
$\mathcal{R}^{\left[  2,3\right]  }$. The ternary multiplication $\mathbf{\mu
}_{\mathfrak{R}}^{\left[  \mathbf{3}\right]  }$ in the polyadic group $\left(
\mathbf{2,3}\right)  $-ring $\mathrm{R}^{\left[  2\mathbf{,3}\right]  }$ is
noncommutative and needs gathering of the similar coefficients before each
product of the ternary group $\mathsf{G}^{\left[  \mathsf{3}\right]  }$ (see
(\ref{ru}) and (\ref{mng})).

For instance, we have $3$ elements of the polyadic group $\left(
\mathbf{2,3}\right)  $-ring $\mathrm{R}^{\left[  \mathbf{2,3}\right]  }$%
\begin{align}
\mathrm{r}\left(  1\right)   &  =5\mathfrak{j}\bullet\mathsf{g}_{5}%
,\label{1}\\
\mathrm{r}\left(  2\right)   &  =2\mathfrak{j}\bullet\mathsf{g}_{7}%
\mathbf{\dotplus}\left(  -7\mathfrak{j}\right)  \bullet\mathsf{g}%
_{8},\label{2}\\
\mathrm{r}\left(  3\right)   &  =\left(  -4\mathfrak{j}\right)  \bullet
\mathsf{g}_{2}\dotplus7\mathfrak{j}\bullet\mathsf{g}_{3}\dotplus\left(
-3\mathfrak{j}\right)  \bullet\mathsf{g}_{6}, \label{3}%
\end{align}
where $\left(  \dotplus\right)  $ is the formal sum $\Sigma_{i}$.

Now we ternary multiply them (\ref{mmr}) and formally open the brackets (for
brevity, clearness and conciseness we do not write the ternary multiplications
manifestly)%
\begin{align}
&  \mathrm{r}_{0}=\mathbf{\mu}_{\mathfrak{R}}^{\left[  \mathbf{3}\right]
}\left[  \mathrm{r}\left(  1\right)  ,\mathrm{r}\left(  2\right)
,\mathrm{r}\left(  3\right)  \right]  =\left(  5\mathfrak{j}\bullet
\mathsf{g}_{5}\right)  \left(  2\mathfrak{j}\bullet\mathsf{g}_{7}%
\dotplus\left(  -7\mathfrak{j}\right)  \bullet\mathsf{g}_{8}\right)  \left(
\left(  -5\mathfrak{j}\right)  \bullet\mathsf{g}_{2}\dotplus7\mathfrak{j}%
\bullet\mathsf{g}_{3}\dotplus\left(  -3\mathfrak{j}\right)  \bullet
\mathsf{g}_{6}\right) \nonumber\\
&  =\left(  5\mathfrak{j}\bullet\mathsf{g}_{5}\right)  \left(  2\mathfrak{j}%
\bullet\mathsf{g}_{7}\right)  \left(  \left(  -4\mathfrak{j}\right)
\bullet\mathsf{g}_{2}\right)  \dotplus\left(  5\mathfrak{j}\bullet
\mathsf{g}_{5}\right)  \left(  3\mathfrak{j}\bullet\mathsf{g}_{7}\right)
\left(  7\mathfrak{j}\bullet\mathsf{g}_{3}\right)  \dotplus\left(
5\mathfrak{j}\bullet\mathsf{g}_{5}\right)  \left(  2\mathfrak{j}%
\bullet\mathsf{g}_{7}\right)  \left(  \left(  -3\mathfrak{j}\right)
\bullet\mathsf{g}_{6}\right) \nonumber\\
&  \dotplus\left(  5\mathfrak{j}\bullet\mathsf{g}_{5}\right)  \left(  \left(
-7\mathfrak{j}\right)  \bullet\mathsf{g}_{8}\right)  \left(  \left(
-4\mathfrak{j}\right)  \bullet\mathsf{g}_{2}\right)  \dotplus\left(
5\mathfrak{j}\bullet\mathsf{g}_{5}\right)  \left(  \left(  -7\mathfrak{j}%
\right)  \bullet\mathsf{g}_{8}\right)  \left(  7\mathfrak{j}\bullet
\mathsf{g}_{3}\right) \nonumber\\
&  \dotplus\left(  5\mathfrak{j}\bullet\mathsf{g}_{5}\right)  \left(  \left(
-7\mathfrak{j}\right)  \bullet\mathsf{g}_{8}\right)  \left(  \left(
-3\mathfrak{j}\right)  \bullet\mathsf{g}_{6}\right)  . \label{m3r}%
\end{align}
Then we use the ternary group multiplication (with manifest ternary
multiplication $\mu_{R}^{\left[  3\right]  }$ in the initial ring
$\mathcal{R}^{\left[  2,3\right]  }$ and $\mu_{\mathsf{G}}^{\left[
\mathsf{3}\right]  }$ in the ternary multiplication in $\mathsf{G}^{\left[
\mathsf{3}\right]  }$)%
\begin{align}
\mathrm{r}_{0}  &  =\mu_{R}^{\left[  3\right]  }\left[  5\mathfrak{j}%
,2\mathfrak{j},\left(  -4\mathfrak{j}\right)  \right]  \bullet\mu_{\mathsf{G}%
}^{\left[  \mathsf{3}\right]  }\left[  \mathsf{g}_{5},\mathsf{g}%
_{7},\mathsf{g}_{2}\right]  \dotplus\mu_{R}^{\left[  3\right]  }\left[
5\mathfrak{j},2\mathfrak{j},7\mathfrak{j}\right]  \bullet\mu_{\mathsf{G}%
}^{\left[  \mathsf{3}\right]  }\left[  \mathsf{g}_{5},\mathsf{g}%
_{7},\mathsf{g}_{3}\right] \nonumber\\
&  \dotplus\mu_{R}^{\left[  3\right]  }\left[  5\mathfrak{j},2\mathfrak{j}%
,\left(  -3\mathfrak{j}\right)  \right]  \bullet\mu_{\mathsf{G}}^{\left[
\mathsf{3}\right]  }\left[  \mathsf{g}_{5},\mathsf{g}_{7},\mathsf{g}%
_{6}\right]  \dotplus\mu_{R}^{\left[  3\right]  }\left[  5\mathfrak{j},\left(
-7\mathfrak{j}\right)  ,\left(  -4\mathfrak{j}\right)  \right]  \bullet
\mu_{\mathsf{G}}^{\left[  \mathsf{3}\right]  }\left[  \mathsf{g}%
_{5},\mathsf{g}_{8},\mathsf{g}_{2}\right] \nonumber\\
&  \dotplus\mu_{R}^{\left[  3\right]  }\left[  5\mathfrak{j},\left(
-7\mathfrak{j}\right)  ,7\mathfrak{j}\right]  \bullet\mu_{\mathsf{G}}^{\left[
\mathsf{3}\right]  }\left[  \mathsf{g}_{5},\mathsf{g}_{8},\mathsf{g}%
_{3}\right]  \dotplus\mu_{R}^{\left[  3\right]  }\left[  5\mathfrak{j},\left(
-7\mathfrak{j}\right)  ,\left(  -3\mathfrak{j}\right)  \right]  \bullet
\mu_{\mathsf{G}}^{\left[  \mathsf{3}\right]  }\left[  \mathsf{g}%
_{5},\mathsf{g}_{8},\mathsf{g}_{6}\right]  .
\end{align}

Performing the ternary multiplications $\mu_{R}^{\left[  3\right]  }$ and
$\mu_{\mathsf{G}}^{\left[  \mathsf{3}\right]  }$ in $\mathcal{R}^{\left[
5,3\right]  }$ we obtain the formal sum%
\begin{equation}
\mathrm{r}_{0}=40\mathfrak{j}\bullet\mathsf{g}_{5}\dotplus\left(
-70\mathfrak{j}\right)  \bullet\mathsf{g}_{6}\dotplus30\mathfrak{j}%
\bullet\mathsf{g}_{9}\dotplus\left(  -140\right)  \mathfrak{j}\bullet
\mathsf{g}_{9}\dotplus245\mathfrak{j}\bullet\mathsf{g}_{9}\dotplus\left(
-105\mathfrak{j}\right)  \bullet\mathsf{g}_{3}.\label{r00}%
\end{equation}

Finally, we gather coefficients from the initial ring $\mathcal{R}^{\left[
2,3\right]  }$ to get the ternary product of elements (\ref{1})--(\ref{3})
from the polyadic group ring $\mathrm{R}^{\left[  \mathbf{2,3}\right]  }$
(\ref{ra1})%
\begin{equation}
\mathrm{r}_{0}=\left(  -105\mathfrak{j}\right)  \bullet\mathsf{g}_{3}%
\dotplus40\mathfrak{j}\bullet\mathsf{g}_{5}\dotplus\left(  -70\mathfrak{j}%
\right)  \bullet\mathsf{g}_{6}\dotplus135\mathfrak{j}\bullet\mathsf{g}_{9}.
\label{r01}%
\end{equation}

The polyadic augmentation map $\varepsilon$ (\ref{aug}) for the elements
(\ref{1})--(\ref{3}) and (\ref{r01}) becomes%
\begin{align}
\varepsilon\left(  \mathrm{r}\left(  1\right)  \right)   &  =5\mathfrak{j},\\
\varepsilon\left(  \mathrm{r}\left(  2\right)  \right)   &  =\varepsilon
\left(  2\mathfrak{j}\bullet\mathsf{g}_{7}\mathbf{\dotplus}\left(
-7\mathfrak{j}\right)  \bullet\mathsf{g}_{8}\right)  =\nu_{R}^{\left[
2\right]  }\left[  2\mathfrak{j}\mathbf{,}\left(  -7\mathfrak{j}\right)
\right]  =-5\mathfrak{j},\\
\varepsilon\left(  \mathrm{r}\left(  3\right)  \right)   &  =\varepsilon
\left(  \left(  -4\mathfrak{j}\right)  \bullet\mathsf{g}_{2}\dotplus
7\mathfrak{j}\bullet\mathsf{g}_{3}\dotplus\left(  -3\mathfrak{j}\right)
\bullet\mathsf{g}_{6}\right)  =\left(  \nu_{R}^{\left[  2\right]  }\right)
^{\circ2}\left[  \left(  -4\mathfrak{j}\right)  ,7\mathfrak{j},\left(
-3\mathfrak{j}\right)  \right]  =0,\\
\varepsilon\left(  \mathrm{r}_{0}\right)   &  =\varepsilon\left(  \left(
-105\mathfrak{j}\right)  \bullet\mathsf{g}_{3}\dotplus40\mathfrak{j}%
\bullet\mathsf{g}_{5}\dotplus\left(  -70\mathfrak{j}\right)  \bullet
\mathsf{g}_{6}\dotplus135\mathfrak{j}\bullet\mathsf{g}_{9}\right) \nonumber\\
&  =\left(  \nu_{R}^{\left[  2\right]  }\right)  ^{\circ3}\left[  \left(
-105\mathfrak{j}\right)  ,40\mathfrak{j},\left(  -70\mathfrak{j}\right)
,135\mathfrak{j}\right]  =0,
\end{align}
where on the r.h.s. there are binary additions in $\mathcal{R}^{\left[
2,3\right]  }$, while on the l.h.s. there are formal sums. According to
(\ref{ke}) the elements $\mathrm{r}\left(  3\right)  $ and $\mathrm{r}_{0}$
are in the kernel.
\end{example}

Let us modify the previous construction for higher polyadic power case.

\begin{example}
Now we take for the initial ring the commutative nonderived $\left(
2,5\right)  $-ring with the underlying set $R=\mathfrak{j}_{4}\mathbb{Z}$
($\mathfrak{j}_{4}^{4}=-1$), operations are in $\mathbb{C}$. The operations
$\nu_{R}^{\left[  2\right]  }$ and $\mu_{R}^{\left[  5\right]  }$are usual
addition and product being the binary addition and nonderived $5$-ary
multiplication (only product of $5$ elements is closed). Thus $\left\langle
\mathfrak{j}_{4}\mathbb{Z}\mid\nu_{R}^{\left[  2\right]  }\right\rangle $ is a
binary group with respect to addition, and $\left\langle \mathfrak{j}%
_{4}\mathbb{Z}\mid\mu_{R}^{\left[  5\right]  }\right\rangle $ is not a $5$-ary
group, but only a $5$-ary semigroup, because there is no multiplicative
querelement for each $r\in R$ (no division in $\mathbb{Z}$). Obviously, that
$\mathcal{R}^{\left[  2,5\right]  }$ is unitless. Note that polyadic
distributivity follows from the binary distributivity in $\mathbb{Z}$.
Therefore,%
\begin{equation}
\mathcal{R}^{\left[  2,5\right]  }=\left\langle \mathfrak{j}_{4}\mathbb{Z}%
\mid\nu_{R}^{\left[  2\right]  },\mu_{R}^{\left[  5\right]  },z_{R}%
\right\rangle
\end{equation}
is a commutative nonderived $\left(  2,5\right)  $-ring without multiplicative
neutral element and $z_{R}=\mathfrak{j}0=0$.

The ternary group $\mathsf{G}^{\left[  \mathsf{3}\right]  }$ is the same as in
the previous example (\ref{g3}). The element of the polyadic group ring has
the same form (\ref{rag}), but now the multiplication (\ref{mmr}) is of $2$nd
polyadic power for $\mathsf{G}^{\left[  \mathsf{3}\right]  }$, i.e.
$\ell_{\mathsf{g}}=2$,%
\begin{align}
\left(  \mu_{\mathsf{G}}^{\left[  \mathsf{3}\right]  }\right)  ^{\circ
2}\left[  \mathsf{g}_{i_{1}},\mathsf{g}_{i_{2}},\mathsf{g}_{i_{3}}%
,\mathsf{g}_{i_{4}},\mathsf{g}_{i_{5}}\right]   &  =g\left(  m_{1}%
,n_{1}\right)  g\left(  m_{2},n_{2}\right)  g\left(  m_{3},n_{3}\right)
g\left(  m_{4},n_{4}\right)  g\left(  m_{5},n_{5}\right) \nonumber\\
&  =g\left(  m_{1}+n_{2}+m_{3}+n_{4}+m_{5},n_{1}+m_{2}+n_{3}+m_{4}%
+n_{5}\right)  . \label{g5}%
\end{align}

The initial ring $\mathcal{R}^{\left[  2,5\right]  }$ is still of $1$st
multiplicative polyadic power%
\begin{equation}
\mu_{R}^{\left[  5\right]  }\left[  \mathfrak{j}_{4}k_{1},\mathfrak{j}%
_{4}r_{2},\mathfrak{j}_{4}k_{3},\mathfrak{j}_{4}k_{4},\mathfrak{j}_{4}%
k_{5}\right]  =\left(  \mathfrak{j}_{4}k_{1}\right)  \left(  \mathfrak{j}%
_{4}r_{2}\right)  \left(  \mathfrak{j}_{4}k_{3}\right)  \left(  \mathfrak{j}%
_{4}k_{4}\right)  \left(  \mathfrak{j}_{4}k_{5}\right)  =-\mathfrak{j}%
_{4}k_{1}k_{2}k_{3}k_{4}k_{5},\ \ \ r_{i}\in\mathbb{Z}, \label{mk}%
\end{equation}
So manifestly the multiplicative arity of the polyadic group ring (\ref{nln})
using the \textquotedblleft quantization\textquotedblright\ condition
(\ref{lll}) with arities $n_{r}=5$ and $\mathsf{n}_{g}=3$, and different
polyadic powers $\ell_{n}=1$ and $\ell_{g}=2$ becomes%
\begin{equation}
\mathbf{n}_{r}=\ell_{n}\left(  n_{r}-1\right)  +1=\ell_{g}\left(
\mathsf{n}_{g}-1\right)  +1=1(5-1)+1=2(3-1)+1=5.
\end{equation}
The additive arity is inherited from the initial ring $\mathcal{R}^{\left[
2,5\right]  }$ and is binary $\mathbf{n}_{r}=n_{r}=2$. Thus, the polyadic
group ring is actually a nonderived $\left(  \mathbf{2,5}\right)  $-ring
$\mathrm{R}^{\left[  \mathbf{2,5}\right]  }$ with $2$nd polyadic power of the
ternary group $\mathsf{G}^{\left[  \mathsf{3}\right]  }$ multiplication. The
computations similar to the previous example can be done using the $5$-ary
multiplications (\ref{g5}) and (\ref{mk}), which manifestly shows the
existence of higher power polyadic group rings.
\end{example}

\section{\textsc{Conclusion}}

This article has laid the foundational framework for the theory of polyadic
group rings, a novel algebraic structure that generalizes the classical group
ring construction $\mathcal{R}[\mathsf{G}]$ to the higher arity setting. We
have formally defined the polyadic group ring $\mathrm{R}^{[\mathbf{m}%
_{r},\mathbf{n}_{r}]}=\mathcal{R}^{[m_{r},n_{r}]}[\mathsf{G}^{[n_{g}]}]$,
constructing its $\mathbf{m}_{r}$-ary addition and $\mathbf{n}_{r}$-ary
multiplication by systematically generalizing the binary operations. A central
achievement of this work is the derivation of the "quantization" conditions
that govern the admissible arities, revealing the profound interplay between
the arities of the initial ring, the initial polyadic group, and the resulting
polyadic group ring, including the novel case of operations with higher
polyadic powers.

We have established essential algebraic properties of these structures,
proving conditions for total associativity and characterizing the existence of
a zero element and identity. Furthermore, the generalization of key tools such
as the augmentation map and augmentation ideal provides a bridge for
transferring techniques from the classical theory into this new domain. The
explicit, non-trivial examples involving nonderived rings and finite polyadic
groups serve to concretely illustrate the theory and demonstrate the
noncommutative, convoluted nature of the multiplication.

This work opens several avenues for future research. The representation theory
of polyadic group rings, their homology, and other homological invariants
remain entirely unexplored and represent a natural next step. Furthermore, the
recent emergence of applied polyadic structures, such as in the
\textquotedblleft polyadic encryption\textquotedblright\ scheme \cite{dup/guo}%
, strongly validates the practical potential of this foundational work. The
complex, multi-operand relationships inherent in polyadic group rings make
them a promising candidate for constructing new cryptographic primitives,
developing non-linear codes, and modeling complex systems where binary
operations are insufficient. Thus, the theory of polyadic group rings not only
enriches pure algebra but also provides a new mathematical language for the
challenges of modern computation and security.

\bigskip

\textbf{Acknowledgements}. The author is grateful to Qiang Guo 
(College of Information and Communication, Harbin Engineering University) and 
Raimund Vogl (Center for Information Technology, University of M\"unster) for their invaluable support. The author also wishes to thank Hailong Chen 
and Yantai Research Institute (China) for gracious hospitality.

\newpage

\pagestyle{emptyf}
\mbox{}

\end{document}